\documentclass[review]{article}

\usepackage{lipsum}
\usepackage{amsfonts}
\usepackage{amsmath}
\usepackage{amsthm}
\usepackage{graphicx}
\usepackage{epstopdf}
\date{}

\usepackage{geometry}
 \geometry{
 left=20mm,
 top=20mm,
 }

\usepackage[]{algorithm}
\usepackage{algorithmicx}
\usepackage{algpseudocode}
\newtheorem{theorem}{Theorem}[subsection]
\newtheorem{assumption}{Assumption}[subsection]
\ifpdf%
  \DeclareGraphicsExtensions{.eps,.pdf,.png,.jpg}
\else
  \DeclareGraphicsExtensions{.eps}
\fi
\usepackage{amsopn}

\usepackage{booktabs}

\newcommand{\TheTitle}{%
  Advances in Implementation, Theoretical Motivation, and Numerical Results for the Nested Iteration with Range Decomposition Algorithm
}


\newcommand{\TheName}{%
  Wayne Mitchell
}

\newcommand{\TheAddress}{%
  University of Colorado at Boulder, 526 UCB, Boulder CO 80309-0626
  (wayne.mitchell@colorado.edu)
}

\newcommand{\TheFunding}{This work was  performed under the auspices of the U.S. Department of Energy under 
grant numbers (SC) DE-FC02-03ER25574 and (NNSA) DE-NA0002376, 
Lawrence Livermore National Laboratory under contract B614452.
}

\newcommand{\TheCollaborators}{%
  Tom Manteuffel
}

\author{\TheName\thanks{\TheAddress}}
\title{{\TheTitle}\thanks{\TheFunding}}
\ifpdf%
\fi

\DeclareMathOperator*{\argmin}{arg\,min}
\begin{document}
\DeclareRobustCommand{\Chi}{\raisebox{2pt}{$\chi$}}

\maketitle

\begin{center}
In collaboration with:
  {\TheCollaborators}
\end{center}
\vspace{1cm}

\begin{abstract}
This paper studies a low-communication algorithm for solving elliptic partial differential equations (PDE's) on high-performance machines, the nested iteration with range decomposition algorithm (NIRD). Previous work by Appelhans et al. in \cite{Appelhans} has shown that NIRD converges to a high level of accuracy within a small, fixed number of iterations (usually one or two) when applied to simple elliptic problems. 
This paper makes some improvements to the NIRD algorithm (including the addition of adaptivity during preprocessing, wider choice of partitioning functions, and modified error measurement) that enhance the method's accuracy and scalability, especially on more difficult problems. In addition, an updated convergence proof is presented based on heuristic assumptions that are supported by numerical evidence.
Furthermore, a new performance model is developed that shows increased performance benefits for NIRD when problems are more expensive to solve using traditional methods. Finally, extensive testing on a variety of elliptic problems provides additional insight into the behavior of NIRD and additional evidence that NIRD achieves excellent convergence on a wide class of elliptic PDE's and, as such, should be a very competitive method for solving PDE's on large parallel computers.
\end{abstract}




\section{Introduction}

When solving elliptic partial differential equations (PDE's), multigrid algorithms often provide optimal solvers and preconditioners capable of providing solutions with $O(N)$ computational cost, where $N$ is the number of unknowns. As parallelism of modern super computers continues to grow towards exascale, however, the cost of communication, especially latency costs, has overshadowed the cost of computation as the next major bottleneck for multigrid algorithms \cite{Gahvari:2011dh}. Typically, multigrid algorithms require $O((\log P)^2)$ communications between processors in order to solve a PDE problem to the level of discretization accuracy, where $P$ is the number of processors. Efforts to reduce the cost of communication for multigrid algorithms is a major area of research, and various approaches to this problem are currently being explored. A good survey of much of this work is given in \cite{Chow:2010ti}.

One approach to reducing communication in multigrid algorithms is to optimize their parallel implementation. These approaches do not change the underlying algorithms, but rather seek to achieve speedups through optimizing operations such as sparse matrix-vector multiplications \cite{DBLP:journals/corr/BienzGO16} or determining processor neighbors \cite{Anonymous:ycC62NK_}. As such, the original convergence properties of the underlying methods are retained, but reductions in communication cost resulting from these optimizations are relatively modest.

A different approach is to modify the multigrid algorithm itself. Much work has been done in this area, especially involving efforts to reduce complexity of operators and density of communication patterns on the coarser grids of multigrid hierarchies \cite{DeSterck:2006et,DeSterck:2008fc}. Although such modifications of the underlying multigrid hierarchies can be detrimental to convergence, the savings in communication cost is often more than enough to ensure a faster method overall. 

A third approach to the problem of high communication costs is to design new algorithms altogether, starting from the premise of minimizing communication. Some notable examples of this kind of work include the algorithms developed by Mitchell \cite{Mitchell:2004hz, Mitchell:2016vg, Mitchell:1998kr} and Bank \cite{Bank:1999uq, Bank:2006hn}. This paper studies the recently developed, low-communication algorithm, Nested Iteration with Range Decomposition (NIRD) \cite{Appelhans}, which aims to solve elliptic PDE's with $O(\log P)$ communications by trading some additional computation on each processor for a significant reduction in communication cost. 

Traditional parallel PDE solvers split the work of solving a problem among processors by partitioning the computational domain or degrees of freedom \cite{Chow:2010ti}. Each processor then owns information about the problem and is responsible for the solution of the problem over its assigned partition of the domain, which is referred to here as that processor's home domain. Problems that are parallelized in this way require inter-processor communication for many important global operations, such as the communication of boundary information with nearest processor neighbors to perform matrix-vector multiplications. Thus, these communications must occur many times over the course of a single multigrid cycle. To avoid this frequent need for communication, NIRD decomposes the problem itself rather than the computational domain. The resulting subproblems are defined over the global computational domain and are able to be stored and solved on individual processors without the need for communication. Solutions to these subproblems are then recombined (involving communication) in order to improve a global solution to the original problem. 

Previous work on NIRD by Appelhans et al. \cite{Appelhans} developed an initial implementation of the algorithm and tested it on model Poisson and advection-diffusion problems. Ultimately, it was shown that, for the model Poisson problem, NIRD achieves accuracy similar to traditional methods within a single iteration with $\log_2 P$ communication steps, and some convergence theory was established to motivate this single-cycle convergence. This paper extends the work done in \cite{Appelhans} by reimplementing NIRD with some improvements to the implementation over the previous code, updating and examining convergence theory, and applying NIRD to a wider set of test problems to gain a better overall understanding of the algorithm's behavior, the convergence theory, and the class of problems for which NIRD performs well.

Section \ref{preliminaries:sec} begins by introducing some preliminary information and tools used throughout the rest of the paper and is followed by a description of NIRD itself in Section \ref{NIRD_description:sec}. Section \ref{NIRD_details:sec} provides a more detailed discussion of some of the improvements to the algorithm over the previous implementation used in \cite{Appelhans} and shows brief numerical examples demonstrating the importance of these improvements. In Section \ref{NIRD_convergence_theory:sec}, the examination of numerical results leads to the identification and verification of realistic, heuristic assumptions that form the basis of an updated and generalized convergence proof. A performance model is then developed in Section \ref{NIRD_performance_model:sec}, which demonstrates the insight that NIRD not only achieves significant reduction in the number of communications compared with a traditional nested iteration approach but also provides even larger savings for some problems that are more difficult and expensive to solve using traditional methods. Finally, Section \ref{NIRD_numerical_tests:sec} shows numerical results for NIRD applied to a variety of elliptic test problems. These results improve our overall understanding of how the algorithm behaves and provide numerical evidence that supports the heuristics of the convergence theory shown in Section \ref{NIRD_convergence_theory:sec}. In addition, NIRD is shown to perform well for elliptic problems with a variety of difficulties as well as for problems using higher-order finite elements.


\section{Preliminaries}
\label{preliminaries:sec}

This section briefly describes the discretization method and adaptive mesh refinement strategy used throughout this paper. The traditional nested iteration approach is also described. 


\subsection{First-order system least-squares (FOSLS)}
\label{FOSLS:sec}

Though the NIRD algorithm may be applied to any discretization that yields a locally sharp a posteriori error estimate, the discussion of NIRD in this paper specifically uses a first-order system least-squares (FOSLS) method to discretize the PDE to be solved. The FOSLS method firsts casts the PDE as a first-order system, $L\mathbf{U} = \mathbf{f}$, where $L$ is the first-order differential operator associated with the PDE, and $L$ is assumed to be linear. Then a solution over some Hilbert space, $\mathcal{W}$, is obtained by minimizing 
\begin{align}
\mathbf{U} =\arg \min_{\mathbf{V}\in \mathcal{W}} || L\mathbf{V} - \mathbf{f} ||,
\end{align}
where $|| \cdot ||$ is the $L^2$ norm. The least-squares functional (LSF), $|| L\mathbf{V} - \mathbf{f} ||$, is continuous and coercive in the $H^1$ Sobolev norm for general second-order, uniformly elliptic PDE's in 2D and 3D, as shown by Cai et al. in \cite{Cai,Cai:1997bq}. This implies optimal error estimates for finite element approximation by subspaces of $H^1$ and the existence of optimal multigrid solvers for the resulting discrete problems. This also allows the local least-squares functional (LLSF), $||L\mathbf{U} - \mathbf{f}||_\tau$, evaluated over a finite element, $\tau$, to be used as an a posteriori error estimate for adaptive mesh refinement, which is an important component of the NIRD algorithm. 


\subsection{Accuracy-per-computational-cost efficiency-based refinement (ACE)}
\label{ACE:sec}

The adaptive mesh refinement (AMR) strategy used throughout this paper is designed to maximize ``accuracy-per-computational-cost" efficiency (ACE). The ACE refinement method, originally proposed by DeSterck et al. in \cite{DeSterck:2008js}, aims to refine a fraction, $r\in(0,1]$, of the total number of elements at a given refinement level by minimizing an effective functional reduction factor, $\gamma(r)^{1/W(r)}$, where $\gamma(r)$ is the functional reduction expected after refining $r$ of the elements and $W(r)$ is the anticipated amount of work required to solve on that refined grid. This is equivalent to choosing
\begin{align}
r_{opt} = \arg \min_{r\in (0,1]}\frac{\log(\gamma(r))}{W(r)}.
\end{align}
The logic behind this minimization is the desire to maximize functional reduction per unit of work. ACE refinement has been shown to perform well on a wide variety of problems, including FOSLS discretizations of PDE's where the FOSLS functional is used as the a posteriori error measure for refinement \cite{Adler:2011hp}.


\subsection{Nested iteration}
\label{intro_NI:sec}

Nested iteration is an efficient, multilevel method for solving PDE's that proceeds by solving first on a very coarse grid, refining, and interpolating the coarse-grid solution to serve as an initial guess on the next grid. This process of solving and interpolating is then repeated until a desired finest grid is reached. Since it leverages coarse-grid solutions, which are relatively cheap to obtain, nested iteration is usually much more efficient than simply solving the fine grid problem by starting with a random or zero initial guess. Furthermore, when paired with a suitable adaptive refinement scheme (such as ACE), nested iteration not only provides a solution with optimal cost but also generates an optimal fine-grid mesh for that solution.

In a geometric multigrid setting, where the solves on each grid level are accomplished by V-cycles, nested iteration is sometimes also referred to as full multigrid (FMG). In this case, it is straightforward to show that a single FMG cycle (using a sufficient, constant number of V-cycles to solve on each grid level) yields a solution with accuracy on the order of the discretization on the finest grid with $O(N)$ computational cost, where $N$ is the number of fine-grid unknowns. A brief inductive proof of this fact can be found in \cite{McCormick:2016vp}.

Note that, while nested iteration (or FMG) has optimal scaling in terms of computational cost, it is not necessarily optimal in terms of communication cost. The efficiency of FMG comes from its ability to leverage information from coarser grids, which are computationally much cheaper than the fine grid. This strength does not translate to communication cost for traditional nested iteration implementations, however, due to the need for a constant number of communications on each level of the multigrid hierarchy. Thus, traditional nested iteration in a weak scaling context (where the number of unknowns per processor is kept fixed, and the number of processors is scaled) accrues an $O((\log P)^2)$ communication cost. Similarly, applying standard V-cycles to the fine-grid problems yields $O((\log P)^2)$ communication cost (though with a larger constant than FMG) due to the need for $O(\log P)$ cycles to solve the problem. Again, the NIRD algorithm attempts to significantly reduce this communication cost to $O(\log P)$. 


\section{NIRD algorithm description}
\label{NIRD_description:sec}

The NIRD algorithm begins with a preprocessing step performed simultaneously and redundantly on all processors, requiring no communication. First, solve the original PDE until there are at least $P$ elements in the mesh, where $P$ is the number of processors. Note that, in this paper, the preprocessing step uses adaptive mesh refinement (discussed further in Section \ref{NIRD_preprocessing:sec}) where previous implementation in \cite{Appelhans} used uniform refinement. This coarse mesh is then partitioned into home domains, $\Omega_l$, for each processor, $l$, and a corresponding partition of unity (PoU) is defined. As a simple example, consider the PoU defined by the characteristic functions,
\begin{align}
\Chi_l = \begin{cases}
1, \,\, \text{in } \Omega_l, \\
0, \,\, \text{elsewhere} .
\end{cases}
\label{disctsChi}
\end{align}
Note that adaptive refinement should place approximately equal portions of the error in each $\Omega_l$, though the $\Omega_l$'s may be very different sizes. Also note that what is meant by ``error" here is left intentionally vague, since the NIRD algorithm may be applied to any discretization method with an accurate a posteriori error measure. The discussion here uses a FOSLS discretization along with ACE refinement as described in Sections \ref{FOSLS:sec} and \ref{ACE:sec}. 

The definition of the home domains and corresponding PoU completes the preprocessing step. As suggested by its name, the NIRD algorithm then decomposes the PDE problem by decomposing the range (or right-hand side) of the residual equation:
\begin{align*}
L\delta\mathbf{u} = \mathbf{f} - L\mathbf{u}_0^h = \sum_l \Chi_l (\mathbf{f} - L\mathbf{u}_0^h),
\end{align*}
where $\mathbf{u}^h_0$ is the coarsely meshed preprocessing solution. Each processor then adaptively solves a subproblem over the entire domain for its right-hand side:
\begin{align}
\delta \mathbf{u}_l^h = \arg\min\limits_{\mathbf{v}^h \in W_l} ||L\mathbf{v}^h - \mathbf{f}_l||,
\label{subproblem_min:eq}
\end{align}
where $\mathbf{f}_l = \Chi_l (\mathbf{f} - L\mathbf{u}_0^h)$ is non-zero only on or near processor $l$'s home domain.

This solve is the main computation step of the algorithm and involves no communication, since each processor solves its subproblem over the entire domain. Note that the PoU should be chosen so that the support of the right-hand side, $\mathbf{f}_l$, is localized on or near the home domain, $\Omega_l$, for each processor. This should lead to an adaptive mesh that is refined primarily in, and remains coarse away from, the home domain. This allows each processor to obtain accurate subproblem solutions while using a reasonable number of elements. The global solution update is then calculated by summing the solutions from each processor:
\begin{align*}
\mathbf{u}_1^h = \mathbf{u}_0^h + \sum_l \delta \mathbf{u}_l^h.
\end{align*}
This sum can be accomplished in exactly $\log_2 P$ communication steps in which processors communicate in a predetermined, pairwise fasion \cite{Appelhans} and yields a solution that lives on a globally refined ``union mesh" obtained by taking the union of the meshes from each subproblem. A new global residual may then be formed and new updates calculated to produce an iteration. Algorithm \ref{nird:alg} shows the pseudo code for the NIRD algorithm in its entirety.

\begin{algorithm}[H]
\caption{NIRD Algorithm.}
\label{nird:alg}
\begin{algorithmic}
	\State Solve $L\mathbf{u}_0^h = \mathbf{f}$ using NI with AMR until the coarse mesh has at least $P$ elements.
	\State Partition the coarse mesh into home domains, $\Omega_l$.
	\State Define a PoU with characteristic functions, $\Chi_l$.
	\For {$i = 0 \rightarrow$ (num iterations)}
		\State Let $\mathbf{f}_l =  \Chi_l (\mathbf{f} - L\mathbf{u}^h_{i})$, where $l$ is this processor's rank.
		\State Solve the subproblem, $L\delta \mathbf{u}^h_l =\mathbf{f}_l$, using NI with AMR.
		\State Update the global solution, $\mathbf{u}^h_{i+1} = \mathbf{u}^h_{i} + \sum_l \delta \mathbf{u}^h_l$ (communication step).
	\EndFor
\end{algorithmic}
\end{algorithm}

Having described the NIRD algorithm, it is worth comparing it here to other low-communication algorithms that have been previously proposed. Perhaps the most natural comparison is with the parallel adaptive meshing paradigm presented by Bank and Holst in \cite{Bank:1999uq, Bank:2006hn}, which shares several similarities with NIRD. After an adaptive, preprocessing step, in which a global coarse mesh is partitioned into home domains with approximately equal error, each processor independently does its own adaptive refinement focusing on its home domain. These independent, adaptively refined meshes are then unioned to form the global mesh, very much like what is done in NIRD. The main distinction between the algorithms is how the actual global solution on the union mesh is obtained. The Bank-Holst paradigm does not partition the right-hand side. Rather each processor solves their subproblem with the full right-hand side, and the adaptive refinement is guided to focus in the home domain by simply multiplying the a posteriori error measure by $10^{-6}$ outside the home domain. The subproblem solutions are then used as initial guesses over the home domains, and the final global solution is obtained through standard, parallel domain decomposition or multigrid techniques. Thus, while this algorithm generates the adaptively refined parallel mesh very efficiently, obtaining the solution on that mesh still requires the application of less communication-efficient, traditional methods. NIRD, on the other hand, achieves both a good global mesh and global solution with very low communication cost. By partitioning the right-hand side and allowing adaptive refinement on each processor to proceed as usual, NIRD generates subproblem solutions that are well-resolved and thus yield a good global solution when added together. 

Another important note about both the Bank-Holst parallel adaptive meshing algorithm and NIRD is their ease of implementation. Both algorithms very naturally leverage existing software and solution techniques in order to perform the serial, adaptive solves on each processor. The only new pieces of code that need to be written are the communication routines and implementation of the partition of unity characteristic functions for NIRD. This makes NIRD relatively easy to implement within the framework of an existing finite element or solver package.


\section{Algorithm details}
\label{NIRD_details:sec}

This section gives some further details on a few key pieces of the NIRD algorithm that are newly implemented or improved since the previous implementation used in \cite{Appelhans}. First, the details of the adaptive preprocessing step are shown along with a brief numerical example showing the importance of this step to the overall accuracy of NIRD. This is followed by some significant additions to the choice of characteristic functions for the partition of unity. The final subsection reveals that naive functional measurement on the NIRD subproblems is potentially problematic as an error measure for adaptive refinement and proposes a fix to restore the validity of the functional as an a posteriori error measure.


\subsection{The preprocessing step}
\label{NIRD_preprocessing:sec}

One contribution of this work is the implementation of an adaptive preprocessing step. Although the preprocessing step was discussed in \cite{Appelhans}, that implementation was not adaptive. Instead, the initial coarse mesh was uniformly refined until it had at least $P$ elements, which may result in unequal distribution of the error across processors and hinder overall NIRD performance. Since computational effort and resources are equally distributed across processors, the error should be as well. Using adaptive refinement to establish the coarse mesh should roughly equally distribute error among processors. Allowing for more elements in the coarse mesh gives adaptive refinement the best chance of equally distributing error among those elements (and subsequently among processors), but processors should still reserve most of their memory resources for solving the local subproblem. This raises the question of how much refinement should be done during the preprocessing step. 

For the purposes of this paper, the preprocessing step enforces $N_c \geq P$, where $N_c$ is the number of elements in the coarse mesh. This is only for ease of implementation and discussion: as $P$ gets  large compared to the memory available on each processor, this is clearly untenable. In this case, one may consider $P$ to be the number of nodes or clusters of processors and apply NIRD at the level of parallelism between these nodes. Such an approach would still result in greatly enhanced performance, as inter-node communication is generally much more expensive than on-node communication.

With the above consideration in mind, the strategy for stopping refinement during the preprocessing step may then be described as follows. If $E$ is the total number of elements that may be stored on a processor, most of these, say 90\%, should be reserved for solving the processor's subproblem. If it is possible to equidistribute the error with fewer elements, however, it is preferable to use as few elements as possible. Judging whether error is well-equidistributed can be tricky and requires the major features (sources of error) of the problem to be resolved with $0.1*E$ or fewer elements. Assuming this is the case, the preprocessing step considers error to be equidistributed if the ratio, $\eta_1/\eta_0$, is below some threshold factor, say 10, where $\eta_1$ is the largest error over a home domain and $\eta_0$ is the smallest error over a home domain. Again, for the purposes of this paper, local error is measured via the local least-squares functional. To summarize, the preprocessing step refines while $(N_c < P)$ or $((N_c < 0.1*E)$ and $(\eta_1/\eta_0 > 10))$.
%

To demonstrate the importance of the adaptive preprocessing step, consider a 2D Poisson problem with the given right-hand side:
\begin{align*}
-\Delta p &= f, \, \text{ on }  \Omega = [0,1]\times[0,1], \\
p &= 0 , \, \text{ on } \partial\Omega, \\
f(x,y) &= \begin{cases} 100, \, (x,y)\in[0.49,0.51]\times[0.49,0.51] \\ 0, \text{ else} \end{cases}.
\end{align*}
The very localized, discontinuous right-hand side demands adaptive refinement during the preprocessing step in order to equidistribute error. Comparing the overall performance of NIRD with and without an adaptive preprocessing step on this problem shows a significant increase in accuracy when using adaptivity as shown in Figure \ref{preprocessing_poisson}. The LSF values shown in this plot are normalized by the LSF of the preprocessing solution using uniform refinement. The reference lines shown are the expected accuracies using a traditional nested iteration approach using $N_U$ or $N_T$ elements, where $N_U$ is the number of elements in the union mesh produced by NIRD and $N_T>N_U$ is the total number of elements used by NIRD for all subproblems. Note that the union meshes produced by NIRD with and without the adaptive preprocessing step have similar numbers of elements, but their accuracies are quite different. This indicates that without the adaptive preprocessing step, NIRD is unable to generate a union mesh with optimal element distribution. As such, the accuracy suffers significantly and will continue suffer more as $P$ increases, whereas with the adaptive preprocessing step, the NIRD iterates quickly achieve accuracy very similar to that achieved by a traditional nested iteration approach using ACE refinement. Thus, the adaptive preprocessing step has allowed NIRD to generate a nearly optimal union mesh and restored scalability for NIRD, even for this problem with a highly localized, discontinuous right-hand side.

\begin{figure}[t]
\centering
\includegraphics[width=0.65\textwidth]{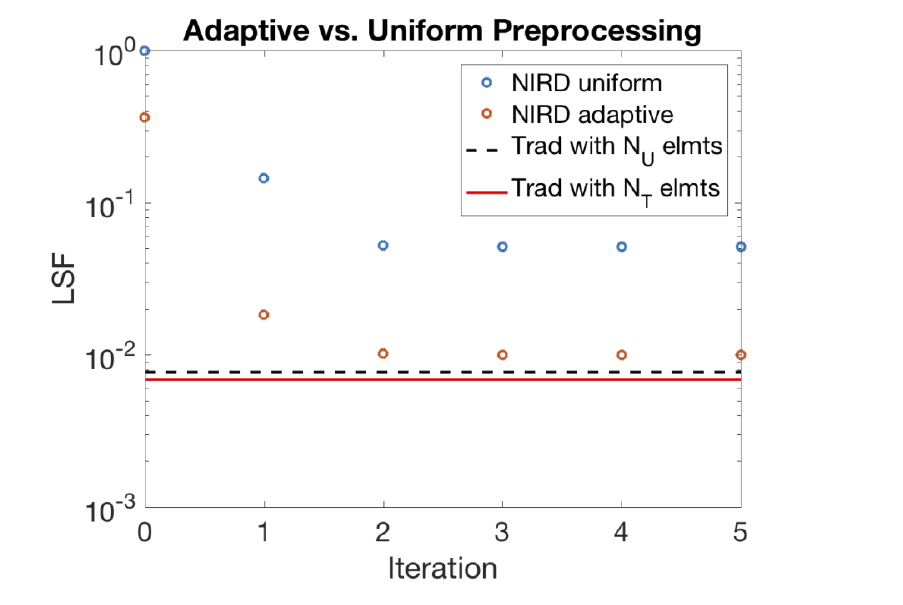}
\caption{Comparison of NIRD convergence with and without an adaptive preprocessing step using 1,024 processors.}
\label{preprocessing_poisson}
\end{figure}


\subsection{Partitions of unity}
\label{NIRD_pou:sec}

In addition to an adaptive preprocessing step, another advancement in the implementation of the NIRD algorithm is the ability to construct continuous or even infinitely smooth characteristic functions for the partition of unity (PoU). Previous implementation of the algorithm supported only the discontinuous PoU shown in equation (\ref{disctsChi}). As mentioned in Section \ref{NIRD_description:sec}, PoU's should be chosen so that the characteristic functions have local support near a processor's home domain, thereby inducing adaptive refinement that focuses on the home domain as much as possible. The discontinuous characteristic functions in equation (\ref{disctsChi}) have support exactly over $\Omega_l$, but these may not be the only good choice for a PoU. A $C^0$ PoU may be constructed from piecewise-linear nodal basis functions over the coarse elements in the preprocessing mesh with nodal values 
\begin{align*}
\Chi_l(x_i) = \begin{cases}
1/n_i, \,\, x_i\in \Omega_l \\
0, \,\, x_i \notin \Omega_l
\end{cases},
\end{align*}
where $n_i$ is the number of home domains incident to node $x_i$. This yields $C^0$ characteristic functions but also expands their support to neighboring elements in the coarse, preprocessing mesh, resulting in more refinement outside of a processor's home domain when adaptively solving its subproblem.

Even smoother characteristic functions may be constructed by combining $C^\infty$ exponential bump functions over the coarse mesh triangulation. The bump functions, $w_k$, window the coarse elements, $\tau_k$, and have the form
\begin{align*}
w_k(x,y) = \begin{cases}
e^{\frac{-1}{(\lambda_1\lambda_2\lambda_3)^p}}, \text{ if } \lambda_i > 0,\,\, \forall i \\
0, \text{ otherwise }
\end{cases}
\end{align*}
where the Cartesian coordinates, $(x,y)$, are converted to the Barycentric coordinates, $(\lambda_1,\lambda_2,\lambda_3)$. The value of $p$ controls the steepness of these bump functions, and for the discussion here, $p=1/2$. The windowing functions, $w_k$, need to overlap each other, so the Barycentric coordinates, $(\lambda_1,\lambda_2,\lambda_3)$, are based on the vertices of an extended triangle defined by pushing the vertices of $\tau_k$ away from the triangle's midpoint by a distance of $d_{min}$, where $d_{min}$ is the minimum diameter of an inscribed circle for all elements that are adjacent to $\tau_k$. In this way, the support of the bump function is limited to $\tau_k$ and its immediate neighboring coarse elements. The smooth, overlapping, windowing functions, $w_k$, may now be combined using Shepard's method \cite{Shepard:1968ef} to obtain a PoU over the home domains, $\Omega_l$, with characteristic functions,
\begin{align*}
\Chi_l = \frac{\sum\limits_{\tau_k\in \Omega_l} w_k }{\sum\limits_{\tau_j\in \Omega} w_j}.
\end{align*}
These characteristic functions retain the compact support and $C^\infty$ smoothness of the windowing functions, but now sum to 1 over the domain, generating a PoU. Also note that, while the sum in the denominator appears to include all coarse elements in the domain, only a few of the windowing functions, $w_j$, are nonzero when evaluating $\Chi_l$, namely those that are directly adjacent to $\Omega_l$, resulting in a reasonable number of required operations in order to evaluate $\Chi_l$.

A heuristic comparison between the various PoU's presented here may be made by comparing their ACE refinement patterns. Figure \ref{pou_refinement_patterns:fig} shows example characteristic functions from each PoU along with the corresponding subproblem ACE refinement pattern when applied to a Poisson problem with a smooth right-hand side using 16 processors. The discontinuous PoU does the best job of localizing the refinement in and around the home domain, but there is a lot of refinement near the discontinuities in the right-hand side at corners of home domains. The $C^0$ PoU smooths out this error, resulting in more evenly distributed refinement, but more refinement thus focuses away from the home domain. The $C^\infty$ PoU combines qualities of the other two, both doing a decent job of localizing refinement and avoiding excessive refinement near edges or corners of the support of the characteristic function. The overall effect of the choice of PoU on NIRD performance is discussed further and better quantified in Section \ref{NIRD_numerical_tests:sec}.

\begin{figure}
\includegraphics[width=\textwidth]{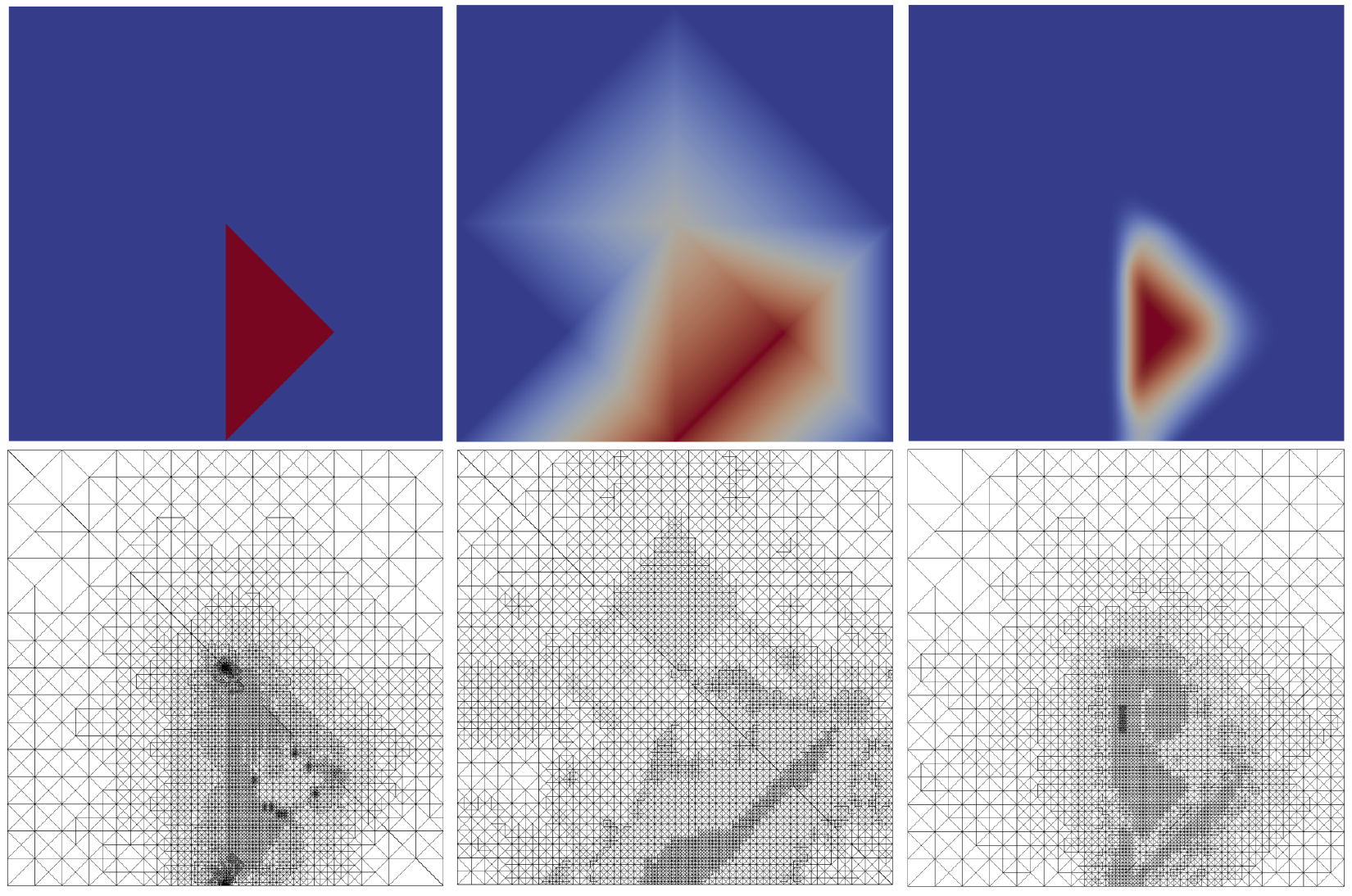}
\caption{Visualization of example characteristic functions (top) and the resulting subproblem refinement patterns (bottom) for the discontinuous (left), $C^0$ (middle), and $C^\infty$ (right) PoU's when applying NIRD to Poisson with a smooth right-hand side using 16 processors.}
\label{pou_refinement_patterns:fig}
\end{figure}


\subsection{Subproblem functional measurement}
\label{NIRD_subproblem_functionals:sec}

Another important note pertaining to the PoU's is that, regardless of the PoU selected, it is possible that $\mathbf{f}_l$ may not be in the range of $L$, even if $\mathbf{f}$ is. That is, applying the characteristic function $\Chi_l$ may push the subproblem right-hand side out of the range of $L$. Thus, $\mathbf{f}_l$ may be decomposed as 
\begin{align*}
\mathbf{f}_l = L\delta\mathbf{u}_l + \boldsymbol{\phi}_l ,
\end{align*}
where $\delta\mathbf{u}_l$ is the continuum solution to the subproblem (i.e. the continuum minimizer for equation (\ref{subproblem_min:eq})) and $\boldsymbol{\phi}_l\in Ker(L^*)$ is nontrivial. This does not change the fact that $\mathbf{u} = \mathbf{u}_0^h + \sum_l \delta\mathbf{u}_l$, as can be seen through a simple inner product argument. Thus, the fact that $\mathbf{f}_l\notin Ran(L)$ does not affect NIRD on a fundamental level: the solution updates found in each subproblem are still correct. This fact can be problematic, however, when measuring the LSF for the subproblems for adaptive refinement. The functional, $||L\delta\mathbf{u}_l^h - \mathbf{f}_l||$, measures the nontrivial component, $\boldsymbol{\phi}_l$, which prevents the functional from converging with refinement. To remedy this, one can subtract off $\boldsymbol{\phi}_l$ when measuring the functional, resulting in the measurement $||L\delta\mathbf{u}_l^h - (\mathbf{f}_l - \boldsymbol{\phi}_l)|| = ||L(\delta\mathbf{u}_l^h - \delta\mathbf{u}_l)||$. This functional now converges with refinement and provides a better indication of where adaptive refinement should occur. 

Calculating the $Ker(L^*)$ components, $\boldsymbol{\phi}_l$, clearly varies from problem to problem, depending on the operator $L$. In some cases, such as a simple div-curl system in which $Ker(L^*)$ is a one-dimensional space, the $\boldsymbol{\phi}_l$ components can be found through a cheap, direct calculation. The problems studied in Section \ref{NIRD_numerical_tests:sec} present a bit more difficulty when calculating the $\boldsymbol{\phi}_l$ components. For these systems, $Ker(L^*)$ is the infinite-dimensional space expressed by
\begin{align*}
Ker(L^*) = \left\{ \begin{bmatrix}
-\nabla^\bot \psi \\ 0 \\ \psi
\end{bmatrix} \right\},
\end{align*}
where $\psi$ is any function in $H^1$. In this case, to find $\boldsymbol{\phi}_l$ for a NIRD subproblem, it is necessary to find $\psi_l$ such that

\begin{align*}
\psi_l &= \argmin_\psi \left|\left|\mathbf{f}_l - \begin{bmatrix} -\nabla^\bot \psi \\ 0 \\ \psi \end{bmatrix}\right|\right|^2. 
\end{align*}
It is not generally possible to directly calculate the solution to this minimization, but the solution may be approximated. Thus, the implementation used for this paper solves the above minimization for $\psi$ in finite element space, $W_l^{(1)}$, a scalar subspace of the full vector finite element space $W_l$ associated with processor $l$'s subproblem. This results in an approximation, $\boldsymbol{\phi}_l^h$, to $\boldsymbol{\phi}_l$, and the error in this approximation should converge at the same rate as the error in the approximation to the subproblem solution, $\delta\mathbf{u}_l$. Thus, the modified functional has the form 
\begin{align*}
||L\delta\mathbf{u}_l^h - (\mathbf{f}_l - \boldsymbol{\phi}_l^h)|| = ||L(\delta\mathbf{u}_l^h - \delta\mathbf{u}_l) + (\boldsymbol{\phi}_l^h-\boldsymbol{\phi}_l)|| \leq ||L(\delta\mathbf{u}_l^h - \delta\mathbf{u}_l)|| + ||\boldsymbol{\phi}_l^h-\boldsymbol{\phi}_l||, 
\end{align*}
and convergence of this functional is again restored, resulting in superior local error measurement over the naive functional, $||L\delta\mathbf{u}_l^h - \mathbf{f}_l||$. Figure \ref{subproblemFunctionalMeasurement:fig} shows convergence of both the naive and modified functionals for each subproblem of NIRD applied to a Poisson problem using 16 processors and linear finite elements. The modified functional achieves the expected $O(h)$ convergence, whereas the naive functional stalls at the level of $||\boldsymbol{\phi}_l||$ for each processor. 

The overall effect on NIRD performance of this modification to the functional measurement on the subproblems actually seems to be somewhat minimal in practice, at least for the problems studied in this paper. The stalling naive functional measurement can lead to somewhat suboptimal ACE refinement patterns on the subproblems, but ultimately the effect is usually not large, and the overall accuracy of NIRD, while improved by modifying the functional, is not drastically changed especially after the first iteration. Thus, in cases where finding the $Ker(L^*)$ components is quite easy (as in the case of the div-curl system above), computing the modified functional is probably worthwhile, whereas in cases where finding these components requires significant computational effort, naive functional measurement is likely better in terms of accuracy per cost. There may exist, however, certain problems for which calculating the $Ker(L^*)$ components is essential to NIRD performance, and searching for such cases is a topic of future research.

\begin{figure}[b!]
\centering
\includegraphics[width=\textwidth]{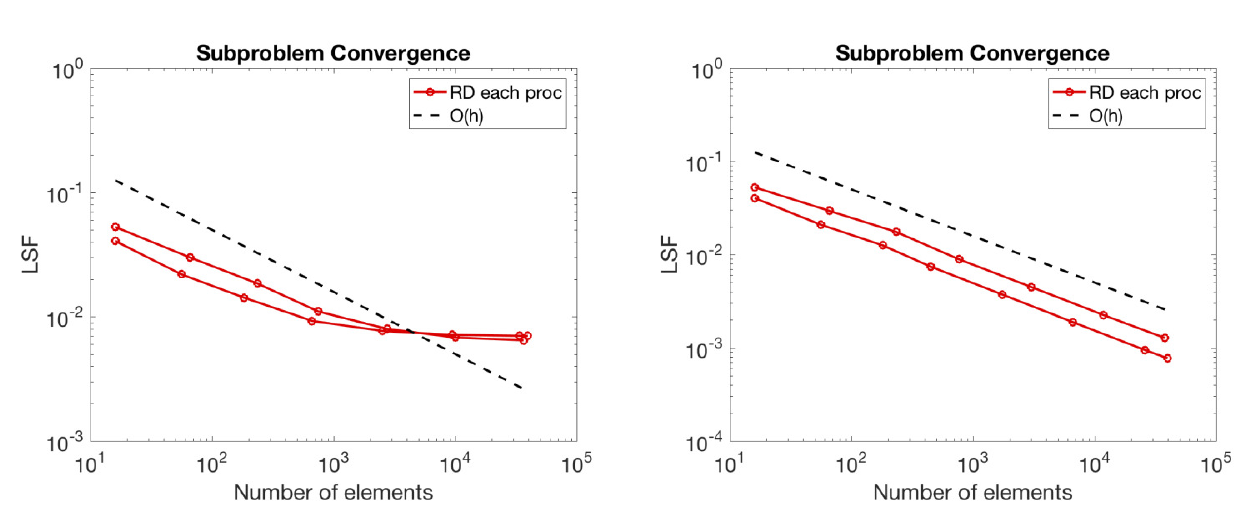}
\caption{Convergence of the LSF for the subproblems on each of 16 processors (note that many of the curves lie on top of each other due to the symmetry of this problem) for Poisson's equation with naive functional measurement (left) and measurement with the $Ker(L^*)$ components removed from the functional (right).}
\label{subproblemFunctionalMeasurement:fig}
\end{figure}




\section{NIRD convergence theory}
\label{NIRD_convergence_theory:sec}

This section describes the most up to date convergence theory for NIRD. This theory utilizes heuristic assumptions that are supported by the numerical results shown in Section \ref{NIRD_numerical_tests:sec}. Developing rigorous convergence theory for NIRD is a challenging task that is still ongoing. While NIRD may appear to share some similarities with existing domain decomposition methods such as additive Schwarz, there are some crucial differences between these methods that severely limit applicability of existing theory. The local problems solved and recombined in traditional additive Schwarz methods are truly local to a subset of the domain (that is they solve only over a patch of the full domain using artificially imposed, local boundary conditions) and have finite overlap with other subproblems. This paradigm forms the basis of the theory developed for Schwarz methods. In NIRD, on the other hand, the ``local" subproblems are, in fact, global in that they solve over the entire computational domain with the original boundary conditions and, thus, have global overlap with all other subproblems. If each processor were able to use the same, globally refined mesh to solve their subproblem, the method would incur no error in that the NIRD solution would be the same as the exact union mesh solution. Thus, combining solutions from subproblem meshes with very different refinement patterns is the source of error between the NIRD solution and the exact union mesh solution, and quantifying this error proves to be difficult.

Section \ref{NIRD_prev_conv:sec} examines key assumptions used in a previous convergence proof from \cite{Appelhans} and measures relevant constants in practice for a simple Poisson problem. This serves to identify which bounds actually hold in practice with nice constants and which do not. Linking numerical results back to the convergence theory in this way has not been done previously for NIRD and is a crucial step in developing the proper heuristic assumptions. Motivated by the results of Section \ref{NIRD_prev_conv:sec}, Section \ref{NIRD_conv:sec} formulates updated assumptions that appear to be realistic in practice and also modifies the structure of the previous convergence proof to account for overlapping PoU's (the proof from \cite{Appelhans} assumes the use of a discontinuous PoU). Thus, the proof presented here reflects the best current understanding of NIRD and yields a clear path forward for further development of the algorithm and the convergence theory.


\subsection{Examining previous assumptions for NIRD convergence}
\label{NIRD_prev_conv:sec}

The previous convergence proof in \cite{Appelhans} suggests that, for many elliptic problems, NIRD requires only a single iteration in order to achieve a solution with functional error within a small factor of that achieved by a traditional nested iteration solve using the same resources. The argument for why such immediate convergence should occur utilizes the heuristic assumptions and important bounds stated below. Recall that $\Omega_k$ refers to processor $k$'s home domain, $\delta\mathbf{u}_k^h$ and $\delta\mathbf{u}_k$ are, respectively, the adaptively refined, discrete approximation and continuum solution for processor $k$'s subproblem, $\mathbf{u}$ is the continuum solution to the original problem, and $\mathbf{u}_1^h$ is the first NIRD iteration approximation to $\mathbf{u}$.

\begin{assumption}
Assume that there exists an $O(1)$ constant, $C_s$, such that
\begin{align*}
|| L(\delta\mathbf{u}_l^h -\delta\mathbf{u}_l) ||_{\Omega_k} &\leq C_{s} || L(\delta\mathbf{u}_k^h -\delta\mathbf{u}_k) ||_{\Omega_l}, &\forall l\neq k.
\end{align*}
\label{symmetry:ass}
\end{assumption}
\begin{assumption}
Assume that there exists an $O(1)$ constant, $C_\rho$, such that
\begin{align*}
\sum_{l\neq k} || L(\delta\mathbf{u}_k^h -\delta\mathbf{u}_k) ||_{\Omega_l} \leq \left(C_{\rho} \sum_{l\neq k} || L(\delta\mathbf{u}_k^h -\delta\mathbf{u}_k) ||_{\Omega_l}^2 \right)^{1/2},  &\forall l.
\end{align*}
\label{exponential:ass}
\end{assumption}

These assumptions are generally motivated by the exponential decay of the Green's function for elliptic PDE's. A more thorough discussion of their motivation may be found in \cite{Appelhans}. The above assumptions are used to establish the following bound that plays a crucial role in the convergence proof:

\begin{align}
||L(\mathbf{u}_1^h - \mathbf{u})||_{\Omega_k} &\leq C_b || L(\delta\mathbf{u}_k^h -\delta\mathbf{u}_k) ||_{\Omega_k},\, \forall k=1...P,
\label{C_b_david:eq}
\end{align}
where
\begin{align}
C_b = \left(1 + C_{s} \left(\frac{C_{\rho}(1-\hat{Q})}{\hat{Q}} \right)^{1/2}\right),
\label{C_b_david_def:eq}
\end{align}
\begin{align}
\hat{Q} = \min_k Q_k\left(\frac{\min\limits_{\tau \subset \Omega_k}|| L(\delta\mathbf{u}_k^h -\delta\mathbf{u}_k) ||_\tau}{\max\limits_{\tau\subset\Omega}|| L(\delta\mathbf{u}_k^h -\delta\mathbf{u}_k) ||_\tau} \right)^2.
\label{Q_hat:eq}
\end{align}
Here, the $\tau$'s are elements in processor $k$'s adaptively refined mesh, and $Q_k$ is the ratio of elements placed in the home turf to the total number of elements after the adaptive subproblem solve on processor $k$. Again, one may refer to \cite{Appelhans} for the derivation of bound (\ref{C_b_david:eq}). 

The assumptions and logic of the proof state that each of the constants, $C_s$, $C_\rho$, $\hat{Q}$, and subsequently $C_b$ should be close to 1 and independent of problem size in order to show that NIRD achieves an accurate solution on the first iteration. In order to investigate the validity of these assumptions and the overall strategy of the proof, it is possible to directly measure these constants in practice. Consider a simple Poisson problem, 
\begin{align*}
-\Delta p &= f , \, \text{ on } \Omega = [0,1]\times[0,1],\\
p &= 0, \, \text{ on } \Gamma ,
\end{align*}
discretized with FOSLS using linear elements. NIRD is known to converge to a solution with accuracy that is very close to that of the union mesh solution in one iteration for this problem (see \cite{Appelhans} or Section \ref{NIRD_numerical_tests:sec} of this paper). 
Evaluating the above line of proof by measuring key constants for this simple test problem shows that the key bound (\ref{C_b_david:eq}) seems to hold in practice while some of the heuristic assumptions and calculation leading to that bound appear either not to hold or to predict values which are far too pessimistic. Table \ref{david_constants:tab} measures the quantities of interest referenced above while scaling the number of processors, $P$, from 64 up to 1,024 and keeping the number of elements in the adaptively refined meshes per processor fixed at 20,000. The first column measures the smallest value, $C_s$, that satisfies Assumption \ref{symmetry:ass}. Note that this quantity grows with problem size, indicating this assumption does not hold in practice, even for a simple Poisson problem. The next column, however, measures a similar quantity,
\begin{align}
\tilde{C}_s = \max_{k} \frac{ \sum\limits_{l\neq k} || L(\delta\mathbf{u}_l^h -\delta\mathbf{u}_l) ||_{\Omega_k} }{ \sum\limits_{l\neq k}|| L(\delta\mathbf{u}_k^h -\delta\mathbf{u}_k) ||_{\Omega_l} } .
\end{align}
Note that, in order for $\tilde{C}_s$ to be an $O(1)$ constant, it is no longer necessary to have some exact symmetry between processor $k$'s error over $\Omega_l$ and processor $l$'s error over $\Omega_k$ for each pair of processors, $k$ and $l$. Rather, the sums above must be of similar size, which relaxes the assumption somewhat. Interestingly, Table \ref{david_constants:tab} shows $\tilde{C}_s$ is significantly smaller than $C_s$ and also remains essentially constant as $P$ grows. Note that the proof in \cite{Appelhans} still follows if the bound in Assumption \ref{symmetry:ass} is simply replaced by a bound of the form
\begin{align*}
\sum_{l\neq k} || L(\delta\mathbf{u}_l^h -\delta\mathbf{u}_l) ||_{\Omega_k} &\leq \tilde{C}_s \sum_{l\neq k} || L(\delta\mathbf{u}_k^h -\delta\mathbf{u}_k) ||_{\Omega_l}.
\end{align*}
Motivating or justifying this bound from a theoretical perspective is not as straightforward as the simpler symmetry assumption used in the original proof, but a bound of this form may, nevertheless, be useful in future development of the convergence theory.

The next quantity measured in Table \ref{david_constants:tab} is the smallest value, $C_\rho$, that satisfies Assumption \ref{exponential:ass}. Again, even for this simple Poisson problem, $C_\rho$ is somewhat large, and may or may not be growing with $P$. Thus, this bound is, at best, quite pessimistic. Similarly, the use of $\hat{Q}$ in the proof is arrived at through the use of simple algebra and requires no further assumptions whatsoever. As shown in Table \ref{david_constants:tab}, however, the value of $\hat{Q}$ is quite small and shrinks as $P$ grows. As a result, it's use in establishing bound (\ref{C_b_david:eq}) is extremely pessimistic to the point where it is no longer useful.

Lastly, Table \ref{david_constants:tab} shows a bottomline comparison between the value for $C_b$ that is predicted by the theory vs. the best value for the bound (\ref{C_b_david:eq}) as measured in practice, that is, 
\begin{align}
C_b &= \left(1 + C_{s} \left(\frac{C_{\rho}(1-\hat{Q})}{\hat{Q}} \right)^{1/2}\right), \\
\tilde{C}_b &=  \max_{k} \frac{||L(\mathbf{u}_1^h - \mathbf{u})||_{\Omega_k}}{|| L(\delta\mathbf{u}_k^h -\delta\mathbf{u}_k) ||_{\Omega_k}}.
\label{C_b_measured:eq}
\end{align}
Clearly, the actual bound, (\ref{C_b_measured:eq}), is satisfied quite nicely, as $\tilde{C}_b$ is small and constant with respect to $P$, whereas the theory stemming from the various heuristic assumptions predict a much larger value, $C_b$, which grows with $P$. This suggests that, while Assumptions \ref{symmetry:ass} and \ref{exponential:ass}, as well as some pessimistic quantities such as (\ref{Q_hat:eq}) do not do a good job of describing the behavior of NIRD in practice, the bound (\ref{C_b_david:eq}) that they aim to establish \emph{does}, in fact, appear to be reasonable. In fact, Section \ref{NIRD_numerical_tests:sec} provides additional numerical evidence to support the hypothesis that NIRD performs well when the bound (\ref{C_b_david:eq}) is satisfied.


\subsection{An updated NIRD convergence proof}
\label{NIRD_conv:sec}

Motivated by the observations of the Section \ref{NIRD_prev_conv:sec}, this section presents a generalization of the previous proof that is applicable to arbitrary PoU's and does away with some of the previous assumptions and bounds. In doing so, this proof preserves and highlights what appear to be crucial assumptions that correlate well with good NIRD performance (as supported by numerical results in Section \ref{NIRD_numerical_tests:sec}) and meaningfully extends the theoretical framework beyond the previous convergence proof, which assumed the use of a discontinuous PoU. Thus, the new, adapted proof is a much more useful tool for understanding the behavior of NIRD and for informing future development of convergence theory.

\begin{assumption}
Assume that the adaptive meshes used here adequately resolve problem features and that error is equidistributed across elements of the adaptive meshes such that it is possible to bound the LSF for some discrete solution, $\mathbf{u}^h$, in terms of the number of elements in the adaptive mesh, $N$. That is, assume
\begin{align*}
||L(\mathbf{u}^h - \mathbf{u})|| \leq C_I N^{-r/d} |\mathbf{u}|_{r+1},
\end{align*}
where $C_I$ is an interpolation constant, $d$ is the dimension of the problem, and $r \leq q$ is the convergence rate determined by the finite element order, $q$, and by the regularity of the problem \cite{Babuvska:1978dd,Zhu:1988dl}. Furthermore, assume bounds of this form are sharp, so that
\begin{align*}
||L(\mathbf{u}^h - \mathbf{u})|| \approx C_I N^{-r/d} |\mathbf{u}|_{r+1}.
\end{align*}
\label{tight_bounds:ass}
\end{assumption}
This assumption simply states that the problem is in the regime where the error is proportional to the number of elements used to resolve the solution. This excludes cases where a problem may be severely under-resolved (in which case some sources of error may not be seen by the under-resolved functional measurement) or where a problem has an exactly recoverable solution (in which case the error goes to zero with a finite number of elements).

\begin{assumption}
Assume that there exists an $O(1)$ constant, $C_0$, such that
\begin{align*}
||L(\mathbf{u}_1^h - \mathbf{u})||_{\tau_k} \leq  C_0 \sum_{l\in \mathcal{L}_k} ||L( \delta\mathbf{u}^h_{l} - \delta\mathbf{u}_{l})||_{\tau_k} ,\,\, \forall \tau_k,
\end{align*}
where the $\tau_k$'s are the coarse elements in the preprocessing mesh, and the set $\mathcal{L}_k$ is defined for each coarse element $\tau_k$ as
\begin{align*}
\mathcal{L}_k = \{l : \text{supp} (\Chi_l) \cap \tau_k \neq \emptyset\}.
\end{align*}
\label{C_0:ass}
\end{assumption}
This assumption is motivated by numerical results and appears to hold in practice when NIRD performs well as shown in Section \ref{NIRD_numerical_tests:sec}. It is also analogous to the bound (\ref{C_b_david:eq}) that is a crucial part of the previous convergence theory. The motivating idea here is that error contributions from processors with a zero right-hand side over a given piece of the domain will be small over that piece of the domain. That is, the primary error in the NIRD solution locally comes from processors that have non-trivial right-hand sides.

\begin{assumption}
Assume a modified interpolation bound of the form
\begin{align*}
||L (\delta\mathbf{u}^h_{l_k} - \delta\mathbf{u}_{l_k})||_{\tau_k} \leq C_I h_k^r  |\delta\mathbf{u}_{l_k}|_{r+1,\tau_k} ,\,\, \forall \tau_k,
\end{align*}
where $l_k = \arg \max_{l\in\mathcal{L}_k} ||L (\delta\mathbf{u}^h_l - \delta\mathbf{u}_l)||_{\tau_k} $, $C_I$ is constant, $h_k$ is the maximum element diameter in processor $l_k$'s subproblem mesh over $\tau_k$, and $r \leq q$ is the convergence rate determined by the finite element order, $q$, and by the regularity of the problem. Furthermore, similar to Assumption \ref{tight_bounds:ass}, assume that the above bound still holds when replacing the step size, $h_k$, by $N_U^{-1/d}$, where $N_U$ is the number of elements in the NIRD union mesh.
\label{interp:ass}
\end{assumption}
A bound of this form is motivated by the fact that a similar bound is guaranteed by finite element theory over the entire domain for each subproblem. The ability to localize such a bound to a subset of the domain cannot generally be guaranteed. This should not be an unreasonable assumption in practice, however, since ACE should approximately equidistribute error among elements in the adaptiveley refined subproblem meshes and the coarse elements, $\tau_k$, where subproblem $l_k$ has nontrivial right-hand side, should contain a significant portion of the total elements used for that subproblem. For the purposes of this proof, assume that the PoU is chosen with sufficient smoothness such that the regularity of the subproblem is not reduced as compared to the original PDE problem and right-hand side. This ensures that $|\delta\mathbf{u}_{l_k}|_{r+1}$ is bounded and that Assumption \ref{interp:ass} is both meaningful and reasonable. It is worth noting, however, that in practice, higher-order convergence has been observed for the subproblems even when the subproblems have reduced regularity due to the use of a discontinuous PoU (see Section \ref{ho_elements:subsec}). This is likely due to the fact that the discontinuities in the right-hand side occur only along element boundaries, thus preserving smoothness on element interiors.

\begin{assumption}
Assume that there exists an $O(1)$ constant, $C_n$, such that
\begin{align}
|\delta\mathbf{u}_{l_k}|_{r+1,\tau_k} \leq \frac{C_n}{|\mathcal{L}_k|} |\delta \mathbf{u}|_{r+1,\tau_k} ,\,\, \forall \tau_k.
\end{align}
where $\delta\mathbf{u}$ is the continuum solution to the global residual equation.
\label{seminorm:ass}
\end{assumption}
A similar assumption was used in \cite{Appelhans}, where the discussion was limited to a discontinuous PoU and $|\mathcal{L}_k| = 1$. The inclusion of $|\mathcal{L}_k|$ in the denominator serves to account for overlapping PoU's. To see the rationale for this, consider the extreme case of globally overlapping characteristic functions, $\Chi_l = 1/P,$ $\forall l$. In this case, the above assumption is trivially satisfied with $C_n = 1$, since $\delta\mathbf{u}_l = (1/P)\delta\mathbf{u},$ $\forall l$ and $|\mathcal{L}_k| = P,$ $\forall k$. 

\begin{theorem}
Under Assumptions \ref{tight_bounds:ass}, \ref{C_0:ass}, \ref{interp:ass}, and \ref{seminorm:ass}, the first NIRD iterate, $\mathbf{u}_1^h$, achieves least-squares functional error within a small, constant factor, $K$, of traditional methods:
\begin{align*}
||L(\mathbf{u}_1^h - \mathbf{u})|| &\leq K ||L(\mathbf{u}_T - \mathbf{u}) ||,
\end{align*}
where $\mathbf{u}_T$ is the solution arrived at using a global nested iteration process using $N_T$ elements in the final, adaptively refined mesh, where $N_T$ is the total number of elements used for all NIRD subproblems.
\label{convergence_proof:thm}
\end{theorem}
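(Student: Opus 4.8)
The plan is to work element-by-element on the coarse preprocessing mesh and then reassemble globally, chaining Assumptions \ref{C_0:ass}, \ref{interp:ass}, and \ref{seminorm:ass} in that order on each coarse element $\tau_k$ before invoking Assumption \ref{tight_bounds:ass} for the final comparison. The guiding observation is that these four assumptions are engineered so that the combinatorial factor $|\mathcal{L}_k|$, introduced when passing from a single dominant subproblem to the full overlap set, cancels exactly against the $1/|\mathcal{L}_k|$ appearing in the seminorm bound. Because the least-squares functional is an $L^2$ norm, its square is additive over the partition $\{\tau_k\}$, so I would phrase everything in terms of $\|\cdot\|_{\tau_k}^2$ and sum only at the end.

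First I would fix a coarse element $\tau_k$ and apply Assumption \ref{C_0:ass} to bound $\|L(\mathbf{u}_1^h-\mathbf{u})\|_{\tau_k}$ by $C_0\sum_{l\in\mathcal{L}_k}\|L(\delta\mathbf{u}_l^h-\delta\mathbf{u}_l)\|_{\tau_k}$. Bounding this sum by $|\mathcal{L}_k|$ times its largest term, which by definition is the $l_k$ term, gives $\|L(\mathbf{u}_1^h-\mathbf{u})\|_{\tau_k}\le C_0|\mathcal{L}_k|\,\|L(\delta\mathbf{u}_{l_k}^h-\delta\mathbf{u}_{l_k})\|_{\tau_k}$. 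Next I would apply Assumption \ref{interp:ass}, in the form that permits replacing the local mesh size $h_k$ by the union-mesh resolution $N_U^{-1/d}$, to obtain $\|L(\delta\mathbf{u}_{l_k}^h-\delta\mathbf{u}_{l_k})\|_{\tau_k}\le C_I N_U^{-r/d}|\delta\mathbf{u}_{l_k}|_{r+1,\tau_k}$, and finally Assumption \ref{seminorm:ass} to replace $|\delta\mathbf{u}_{l_k}|_{r+1,\tau_k}$ by $(C_n/|\mathcal{L}_k|)|\delta\mathbf{u}|_{r+1,\tau_k}$. The two $|\mathcal{L}_k|$ factors cancel, leaving the clean local estimate $\|L(\mathbf{u}_1^h-\mathbf{u})\|_{\tau_k}\le C_0 C_I C_n N_U^{-r/d}|\delta\mathbf{u}|_{r+1,\tau_k}$ with a constant independent of the overlap structure.

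I would then square, sum over all coarse elements, and use additivity of both the functional and the broken $H^{r+1}$ seminorm over $\{\tau_k\}$ to reach the global bound $\|L(\mathbf{u}_1^h-\mathbf{u})\|\le C_0 C_I C_n N_U^{-r/d}|\delta\mathbf{u}|_{r+1}$. To close the argument I would compare against the traditional solve: Assumption \ref{tight_bounds:ass} applied to $\mathbf{u}_T$ on its $N_T$-element mesh gives $\|L(\mathbf{u}_T-\mathbf{u})\|\approx C_I N_T^{-r/d}|\mathbf{u}|_{r+1}$. Dividing yields the claimed bound with $K = C_0 C_n (N_T/N_U)^{r/d}\,\bigl(|\delta\mathbf{u}|_{r+1}/|\mathbf{u}|_{r+1}\bigr)$. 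It then remains to argue that $K$ is a small $O(1)$ constant: the ratio $N_T/N_U$ is essentially the reciprocal of the average home-turf fraction, since $N_U$ counts the union of the refined regions while $N_T$ sums them together with their coarse complements, and is therefore bounded; meanwhile $|\delta\mathbf{u}|_{r+1}=|\mathbf{u}-\mathbf{u}_0^h|_{r+1}$ equals $|\mathbf{u}|_{r+1}$ whenever the coarse iterate $\mathbf{u}_0^h$ is a piecewise polynomial whose order matches the convergence rate $r=q$ (its elementwise $(r+1)$-st derivatives vanish), and is comparable to it otherwise.

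The main obstacle, given that the assumptions are tailored to the calculation, is this final reconciliation rather than the elementwise chain. The NIRD error is intrinsically measured against the union-mesh resolution $N_U$ and the residual-equation solution $\delta\mathbf{u}$, whereas the target compares against $N_T$ and the full solution $\mathbf{u}$; making the bridge $(N_T/N_U)^{r/d}$ genuinely $O(1)$ relies on the home-turf fraction staying bounded away from zero as $P$ grows, which is exactly the regime the adaptive preprocessing step of Section \ref{NIRD_preprocessing:sec} is designed to maintain. A secondary subtlety is justifying the union-mesh replacement inside Assumption \ref{interp:ass} \emph{uniformly} across the coarse elements where subproblem $l_k$ carries a nontrivial right-hand side, since this silently presumes that ACE equidistributes error across those elements at a resolution comparable to $N_U^{-1/d}$.
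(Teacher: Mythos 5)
Your elementwise chain---Assumption \ref{C_0:ass}, then bounding the sum by $|\mathcal{L}_k|$ times its largest ($l_k$) term, then Assumption \ref{interp:ass} with the $N_U^{-1/d}$ replacement, then Assumption \ref{seminorm:ass} to cancel $|\mathcal{L}_k|$, then squaring and summing over the coarse elements---is exactly the paper's proof, arriving at the same intermediate bound $||L(\mathbf{u}_1^h-\mathbf{u})||_\Omega \leq C_0C_IC_n N_U^{-r/d}|\delta\mathbf{u}|_{r+1,\Omega}$. The only divergence is cosmetic and comes at the end: the paper interposes the exact union-mesh solution $\mathbf{u}_U$ and applies the sharpness of Assumption \ref{tight_bounds:ass} twice (first comparing $\mathbf{u}_1^h$ to $\mathbf{u}_U$, both measured against $|\delta\mathbf{u}|_{r+1}$, giving $K_0$; then comparing $\mathbf{u}_U$ to $\mathbf{u}_T$, giving $K_1Q^{-r/d}$), which implicitly absorbs the seminorm ratio $|\delta\mathbf{u}|_{r+1}/|\mathbf{u}|_{r+1}$ that your direct comparison instead carries explicitly into $K$ and must argue is $O(1)$ by hand.
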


\begin{proof}

First, break up the total functional for the solution produced by the first full NIRD iteration, $\mathbf{u}_1^h$, over the $N_c$ coarse elements in the preprocessing mesh, $\tau_k$:
\begin{align*}
||L(\mathbf{u}_1^h - \mathbf{u})||^2_\Omega =  \sum_{k=1}^{N_c} ||L(\mathbf{u}_1^h - \mathbf{u})||^2_{\tau_k}.
\end{align*}
Use the triangle inequality on each term in the sum above:
\begin{align*}
||L(\mathbf{u}_1^h - \mathbf{u})||_{\tau_k} \leq \sum_{l\in\mathcal{L}_k} ||L (\delta\mathbf{u}^h_l - \delta\mathbf{u}_l)||_{\tau_k} + \sum_{m\notin\mathcal{L}_k} ||L (\delta\mathbf{u}^h_m - \delta\mathbf{u}_m)||_{\tau_k}.
\end{align*}
Now, use Assumption \ref{C_0:ass} to sweep the sum over $m\notin \mathcal{L}_k$ into the $O(1)$ constant, $C_0$:
\begin{align}
||L(\mathbf{u}_1^h - \mathbf{u})||_{\tau_k} \leq  C_0 \sum_{l\in\mathcal{L}_k} ||L( \delta\mathbf{u}^h_l - \delta\mathbf{u}_l)||_{\tau_k}.
\label{C0:eq}
\end{align}
Proceed by bounding the sum above by the error contribution from a single processor:
\begin{align}
\sum_{l\in\mathcal{L}_k} ||L (\delta\mathbf{u}^h_l - \delta\mathbf{u}_l)||_{\tau_k} \leq |\mathcal{L}_k| \cdot ||L (\delta\mathbf{u}^h_{l_k} - \delta\mathbf{u}_{l_k})||_{\tau_k},
\end{align}
where $l_k = \arg \max_{l\in\mathcal{L}_k} ||L (\delta\mathbf{u}^h_l - \delta\mathbf{u}_l)||_{\tau_k} $. Combining the above steps yields the following important bound:
\begin{align}
||L(\mathbf{u}_1^h - \mathbf{u})||_{\tau_k} \leq  C_0 |\mathcal{L}_k| \cdot ||L( \delta\mathbf{u}^h_{l_k} - \delta\mathbf{u}_{l_k})||_{\tau_k} ,\,\, \forall \tau_k,
\label{Cb:eq}
\end{align}
where, again, $C_0$ is assumed to be an $O(1)$ constant independent of $P$. Using the above bound in conjunction with Assumptions \ref{interp:ass} and \ref{seminorm:ass} yields,
\begin{align*}
||L(\mathbf{u}_1^h - \mathbf{u})||^2_\Omega &=  \sum_{k=1}^{N_c} ||L(\mathbf{u}_1^h - \mathbf{u})||^2_{\tau_k} \leq   \sum_{k=1}^{N_c} ( C_0 |\mathcal{L}_k| \cdot ||L (\delta\mathbf{u}^h_{l_k} - \delta\mathbf{u}_{l_k})||_{\tau_k}) ^2 \\
&\leq   C_0^2 \sum_{k=1}^{N_c} ( |\mathcal{L}_k| C_I h_k^r  |\delta\mathbf{u}_{l_k}|_{r+1,\tau_k} )^2 \leq  (C_0C_IN_U^{-r/d} )^2 \sum_{k=1}^{N_c}C_n^2  |\delta\mathbf{u}|_{r+1,\tau_k}^2 \\ 
&= (C_0C_IC_n N_U^{-r/d} |\delta\mathbf{u}|_{r+1,\Omega} )^2 .
\end{align*}
Taking the square root yields the bound, 
\begin{align}
||L(\mathbf{u}_1^h - \mathbf{u})||_\Omega \leq C N_U^{-r/d} |\delta\mathbf{u}|_{r+1,\Omega}, 
\end{align}
where $C = C_0C_IC_n$. Directly solving the residual equation on the union mesh yields the same bound as above with a different constant. Thus, assuming such bounds are sharp as stated in Assumption \ref{tight_bounds:ass}, there is a constant, $K_0$, such that
\begin{align}
||L(\mathbf{u}_1^h - \mathbf{u})|| & \leq K_0 ||L(\mathbf{u}_U - \mathbf{u}) ||.
\end{align}
where $\mathbf{u}_U$ is the exact union mesh solution. Using Assumption \ref{tight_bounds:ass}, the accuracy of the exact union mesh solution, $\mathbf{u}_U$, may be related to the accuracy of the traditional nested iteration solution, $\mathbf{u}_T$, by,
\begin{align}
||L(\mathbf{u}_U - \mathbf{u}) || \leq K_1Q^{-r/d} ||L(\mathbf{u}_T - \mathbf{u}) ||
\end{align}
where $Q = N_U/N_T$. It is also important to note here that that the constant $K_1$ also accounts for potentially suboptimal element distribution in the union mesh generated by NIRD. The traditional solution, $\mathbf{u}_T$, is generated using global adaptive mesh refinement on the original problem and is thus assumed to be optimal in the sense of achieving the lowest possible error given $N_T$ available elements. The union mesh, on the other hand, is generated by unioning meshes that are adaptively refined on independent subproblems, so there is no such guarantee of optimality for the union mesh. Numerical results in Section \ref{NIRD_numerical_tests:sec} indicate that the union mesh generated by NIRD is close to optimal in many cases, though some notable exceptions are pointed out. Finally, combining inequalities and letting $K = K_0K_1Q^{-r/d}$ yields the desired result:
\begin{align*}
||L(\mathbf{u}_1^h - \mathbf{u})|| &\leq K ||L(\mathbf{u}_T - \mathbf{u}) ||.
\end{align*}

\end{proof}

Thus, the first iteration of NIRD yields a solution with LSF within a factor $K$ of that achieved by a traditional solve, and it does so with only a single communication phase with $O(\log P)$ cost. The size of the constant $K = K_1K_0Q^{-r/d}$ is determined by the degree to which the assumptions made in the proof hold. The size of the constants $C_0$ and $Q$ both play a significant role in determining $K$ and are also both directly measurable. Section \ref{NIRD_numerical_tests:sec} shows numerical tests in which these quantities are measured and convergence is studied for some different elliptic test problems.

When considering a discontinuous PoU where each processor's home domain consists of a single element from the coarse preprocessing mesh, the above proof reduces to the same argument made in \cite{Appelhans} while replacing the unrealistic arguments leading to bound (\ref{C_b_david:eq}) with Assumption \ref{C_0:ass}. Future work will focus on establishing realistic arguments that lead to Assumption \ref{C_0:ass} based on a priori knowledge of the PDE problem. The generalized framework presented here is designed to accommodate other PoU's. For example, consider the extreme case in which $\Chi_l = 1/P$ over the entire domain for each processor, $l$. Clearly, these global characteristic functions yield an impractical overall algorithm, but they also trivially lead to $||L(\mathbf{u}_1^h - \mathbf{u})|| = ||L(\mathbf{u}_U - \mathbf{u}) || = K_1 P^{r/d} ||L(\mathbf{u}_T - \mathbf{u}) ||$, which provides a good sanity check for the structure of the proof. The $C^0$ and $C^\infty$ PoU's discussed in this paper or other general overlapping PoU's fall somewhere between the discontinuous and global PoU's, but should still be accommodated successfully by this version of the proof.


\section{Modeling communication cost}
\label{NIRD_performance_model:sec}

This section outlines some simple performance models comparing NIRD with a traditional nested iteration (NI) approach. Similar models and descriptions may be found in \cite{Appelhans}, though the updated models presented here take into account the effects of problem difficulty by including details such as coarsening factors, convergence factors, and number of processor neighbors. The following models subsequently predict larger performance gains for NIRD over traditional NI for some more difficult problems.

Only the latency cost, or the number of communications, is considered here. Bandwidth cost may appear to be significant for NIRD since bulk information for large swaths of the domain are communicated on each communication step in the implementation described above, but the algorithm may be modified such that only boundary data is sent and the $L$-harmonic solution over the communicated portion of the domain is recalculated on the receiving processor. This modification requires extra computation, but the bandwidth cost is reduced to the same complexity as a traditional nested iteration approach \cite{Appelhans}. In specific cases of machines with limited bandwidth, this modification may be worth implementing. Previous modeling and benchmarking of algebraic multigrid solvers, however, indicate that latency cost is the primary bottleneck that needs to be overcome on current and future machines \cite{Gahvari:2011dh,DeSterck:2006et}. Thus, this discussion focuses only on examining the number of communications required to solve a PDE problem using NIRD vs. traditional NI.

\subsection{Model descriptions and comparisson}

For the purposes of this discussion, assume that traditional NI begins by refining on a single processor until some maximum number of elements is reached, then distributes this mesh among all available processors and continues to refine, solving on each level using some fixed number of AMG V-cycles, until a desired finest mesh is obtained. In a weak scaling context, the size of the coarse, single-processor mesh is $E$, and the size of the finest mesh is $N = EP$, where $E$ is the maximum number of elements per processor, $P$ is the number of processors, and $E>>P$. Assume that each refinement multiplies the number of degrees of freedom by some constant $c$, and, for simplicity, assume the AMG coarsening factor is similar, so that the number of levels in the multigrid hierarchy for the V-cycles is equal to the number of NI refinements. There is some amount of error reduction, $\beta$, required on each NI level in order to maintain a solution with accuracy on the order of the discretization, and $\beta$ should be fixed independent of the refinement level (this is a main premise of the inductive proof to show $O(n)$ convergence of full multigrid given in \cite{McCormick:2016vp}). Thus, with a V-cycle convergence factor, $\rho$, that is independent of problem size, NI requires $\log_{1/\rho}\beta$ V-cycles per level. Finally, assume that the number of communications on each level of the V-cycle is a constant, $\nu$. The resulting, approximate communication cost for traditional NI, $C_T$, can then be obtained by summing over the NI levels (note presence of the index $i$ in the sum below accounts for the number of levels in the V-cycle):

\begin{align*}
C_T &\approx \sum_{i = \log_c E}^{\log_c (EP)} \nu(\log_{1/\rho}\beta)i \\
&= \nu (\log_{1/\rho}\beta) ( \log_c E \log_c P  + (1/2)(\log_c P)^2 + \log_c E + (1/2)\log_c P).
\end{align*}
Note that, in the regime where $E>P$, it follows that $\log_c E \log_c P > (\log_c P)^2$. While the $O((\log_c P)^2)$ scaling is the major concern, the constant, $\nu (\log_{1/\rho}\beta)$ should not be ignored. This constant can be as small as about 20 for the easiest of problems, such as a 5-point Laplacian problem, but can also grow dramatically for more difficult problems. Larger stencils induced by higher-dimensional problems or higher-order elements as well as the problem of increased operator complexity on the coarse levels of the AMG hierarchies can greatly increase the value of $\nu$ \cite{Gahvari:2011dh,DeSterck:2006et}. Problems which are more difficult for AMG will also yield larger convergence factors, $\rho$, leading to an increase in the $\log_{1/\rho}\beta$ term. Also, the base of the logarithms, $c$, may vary from problem to problem. For problems that demand highly localized adaptive refinement, the total number of NI levels will grow. Also, AMG may not be able to coarsen some difficult problems as aggressively, leading to growth in the number of levels in the multigrid hierarchy. This is all to say that traditional NI exhibits not only poor asymptotic scaling of communication cost, but also suffers larger communication cost depending on the difficulty of the problem.

NIRD, on the other hand, is agnostic to the linear system solver used. Additional work performed in solving the linear system is all done in parallel for traditional NI and, thus, incurs larger communication cost, whereas this extra effort for difficult problems is performed in serial, independently on each processor in NIRD. For any problem, NIRD incurs a communication cost, $C_N$, of 
\begin{align}
C_N = \alpha\log_2 P,
\end{align}
where $\alpha$ is simply the number of NIRD iterations. As shown in Section \ref{NIRD_numerical_tests:sec}, $\alpha=1$ or 2 is often (though not always) sufficient to provide a very accurate result across a variety of problems and discretizations, even when these problems are more difficult to solve using traditional methods. In particular, the anisotropic and jump-coefficient Poisson problems, and Poisson using high-order elements will require more work to solve the linear systems, but are still scalably solvable with 1 or 2 NIRD iterations.

Figure \ref{commCostModel:fig} shows the predicted number of communications, $C_T$ and $C_N$, for a traditional NI solve and NIRD. The cost for NIRD assumes the use of $\alpha = 2$ NIRD iterations. For traditional NI, predicted performance is shown for both an easy and more difficult problem. The easy problem uses $\nu = 20$ communications per level, which is a reasonable estimate for doing two relaxations, prolongation, restriction, and residual calculation with four processor neighbors as one might expect for a 5-point Laplacian problem. The convergence factor for the easy problem is $\rho = 0.1$, the coarsening factor is $c = 4$, and the amount of reduction needed on each level is $\beta = 9$. For the hard problem, $\nu = 40$, as one might expect for a problem with eight processor neighbors on each level, the convergence factor is $\rho = 0.8$, and the coarsening factor is $c = 2$, as one might expect for a problem that demands semicoarsening. Each problem uses $E = 10^6$ elements per processor. As shown in Figure \ref{commCostModel:fig}, even for the easy problem, which is basically an ideal, textbook case for full multigrid, traditional NI requires almost two orders of magnitude more communications than NIRD. For the hard problem, this difference is magnified, with traditional NI requiring between three and four orders of magnitude more communications than NIRD. Thus, there is a clear benefit here for NIRD when attempting to solve more difficult problems. The communication cost of traditional methods will continue to rise as the difficulty of the linear systems that need to be solved increases, whereas NIRD retains its already comparatively small communication cost.

\begin{figure}
\centering
\includegraphics[width=0.5\textwidth]{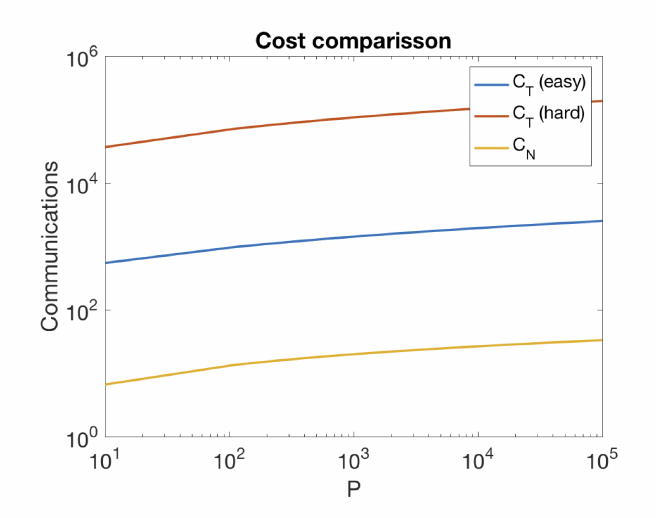}
\caption{Plot of model predictions of number of communications vs. number of processors for NI vs. NIRD.}
\label{commCostModel:fig}
\end{figure}


\section{NIRD numerical results}
\label{NIRD_numerical_tests:sec}

Numerical results from \cite{Appelhans} have already shown the NIRD algorithm to have performance benefits over traditional methods on high-performance machines (where communication is a dominant part of the cost) for simple Poisson and advection-diffusion problems, and the performance model discussed in Section \ref{NIRD_performance_model:sec} predicts even larger performance benefits for more difficult problems. 
The focus here is to provide numerical results that support the heuristic assumptions used in Section \ref{NIRD_conv:sec} and to provide an enhanced understanding of the behavior of NIRD while also applying the algorithm to a wider set of test problems than previously studied and examining performance using different PoU's. The general form of the test problems used in this section is

\begin{align}
-\nabla\cdot(  A \nabla p) + \mathbf{b} \cdot \nabla p &= f , \, \text{ on } \Omega = [0,1]\times[0,1],\\
p &= 0, \, \text{ on } \Gamma_D, \\
\mathbf{n}\cdot \nabla p &= \mathbf{0}, \, \text{ on } \Gamma_N ,
\label{nird_test_problem:eq}
\end{align}
where $\mathbf{b}$ is a vector representing the direction and strength of the advection term, $\mathbf{n}$ is the unit outward normal vector, and 
\begin{align}
A &= \alpha \begin{bmatrix}
1 & 0 \\
0 & \epsilon
\label{diff_coeff_and_anisotropy:eq}
\end{bmatrix}
\end{align}
represents the diffusion coefficient, $\alpha$, along with grid-aligned anisotropy with strength $\epsilon$. 

\subsection{Poisson}

Beginning with the simplest example, consider a Poisson problem, $\alpha = 1, \epsilon = 1, \mathbf{b} = \mathbf{0}$, with zero Dirichlet conditions on the entire boundary. Let the right-hand side be defined by the smooth function,
\begin{align*}
f_0(x,y) &= \sin(\pi x)\sin(\pi y).
\end{align*}
Figure \ref{poisson_f4_scaling_conv:fig} shows NIRD convergence for this Poisson problem with the smooth right-hand side, $f_0$, using a discontinuous PoU and scaling the number of processors, $P$, from 64 to 256 to 1,024 while keeping the number of elements allowed in the finest mesh per processor, $E$, fixed at 20,000. The LSF's shown are normalized by the LSF achieved by the solution on the preprocessing mesh, and the reference lines plotted are the accuracy expected using traditional nested iteration (using ACE refinement) using $N_U$ or $N_T$ elements, where $N_U$ is the number of elements in the union mesh produced by NIRD and $N_T$ is the total number of elements used by NIRD across all subproblems. As expected, NIRD performs well on this problem, producing an accurate solution (that is, a solution that has functional error within a small factor of that achieved by a traditional solution) within the first one or two iterations. Recall that each NIRD iteration requires only $\log_2 P$ communications, so the number of communications required to solve the problem with NIRD is significantly less than what is required by a traditional nested iteration approach. Another thing to note about this problem is that $N_U$ is quite close to $N_T$, meaning that most of the elements used by NIRD end up in the union mesh. Also, the NIRD iterates obtain accuracy close to the traditional solution with $N_U$ elements, indicating that the union mesh produced by NIRD is achieving nearly optimal element distribution, even for the discontinuous PoU. Comparing the performance of NIRD using different PoU's, as shown in Figure \ref{poisson_f4_pou_conv:fig}, shows that the discontinuous and $C^\infty$ PoU's have similar performance, whereas for the $C^0$ PoU, there is some widening of the gap between $N_U$ and $N_T$, meaning that the NIRD union mesh is unable to achieve as good accuracy. This can be explained by the fact that the $C^0$ PoU has the largest support of any of the PoU's, and therefore requires the most refinement outside of the home domains for each subproblem.

\begin{figure}[t]
\includegraphics[width=\textwidth]{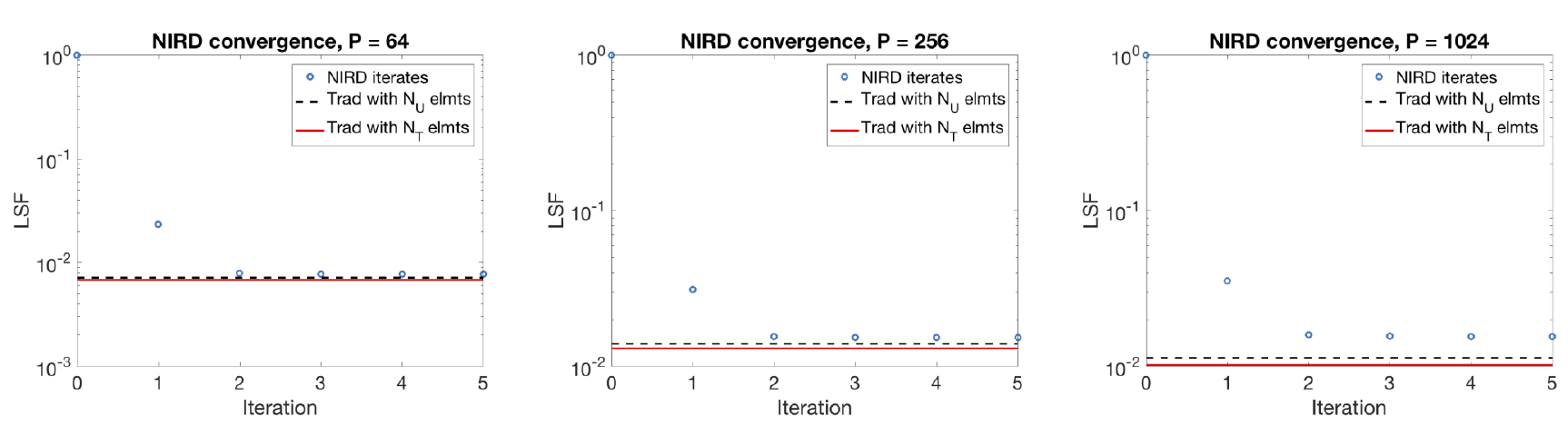}
\caption{NIRD convergence for Poisson with a smooth right-hand side using a discontinuous PoU.}
\label{poisson_f4_scaling_conv:fig}
\end{figure}

\begin{figure}[t]
\includegraphics[width=\textwidth]{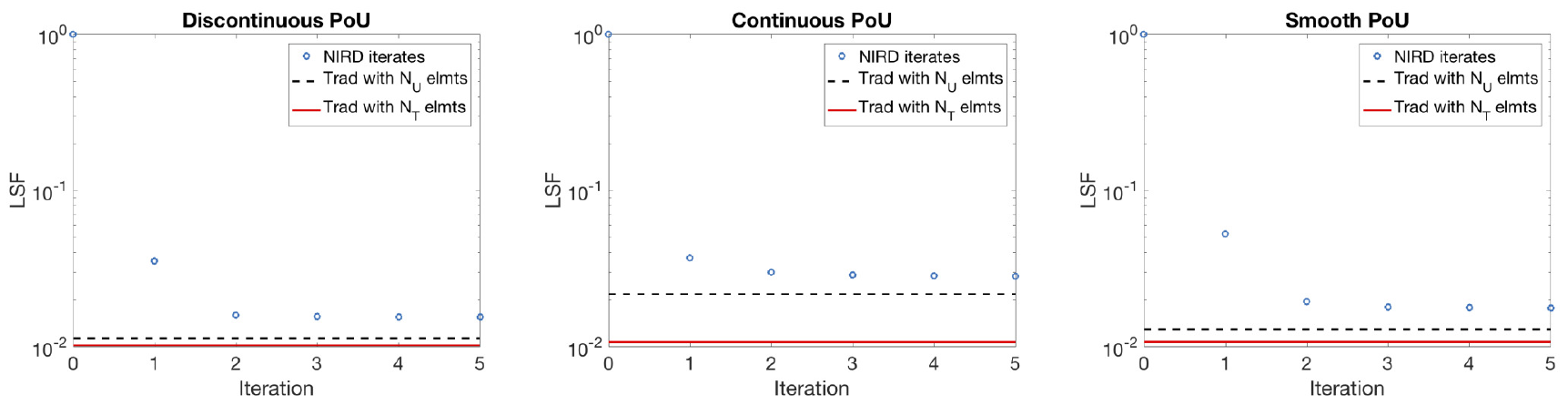}
\caption{Compare NIRD convergence using different PoU's for Poisson with a smooth right-hand side and 1,024 processors.}
\label{poisson_f4_pou_conv:fig}
\end{figure}

Table \ref{poisson_f4:tab} shows some measured quantities for Poisson with a smooth right-hand side. First, $\eta_1/\eta_0$, where $\eta_1$ and $\eta_0$ are the largest and smallest LSF's over home domains after the preprocessing step, and $N_c$, the number of elements in the preprocessing mesh, are shown. This gives an idea of how good a job the preprocessing step does of equidistributing the error among processors. For this simple problem, the error is, unsurprisingly, quite well equidistributed. Next, the smallest constant, $C_0$, that satisfies bound (\ref{C0:eq}) is measured. For this problem, $C_0$ remains small and constant even as $P$ grows. The fact that this is correlated with excellent NIRD convergence on the first iteration lends some validity to the line of proof given in Section \ref{NIRD_conv:sec}. Finally, the table also shows $Q^{(i)}$, the ratio of the number of elements in the union mesh on the $i$'th NIRD iteration vs. the total elements used by NIRD, and $K^{(i)} =||L\mathbf{u}^h_i - \mathbf{f}|| / ||L\mathbf{u}^h_{T} - \mathbf{f}||$, where $\mathbf{u}^h_{T}$ is the solution arrived at by a traditional solve using the same number of elements. One trend present here, which holds for many of the test problems tried throughout this section, is that $Q^{(i)}$ grows on the second iteration (sometimes quite significantly), which is part of the reason why the overall functional for the second NIRD iterate is noticeably smaller than the for the first. The major takeaway here, however, is that even on the first NIRD iteration, the functional error for the NIRD solution is within a factor of about 2 or 3 of the error expected for a traditional solve using the same resources. Thus, NIRD is achieving accuracy which is quite close to traditional methods with minimal communication.

Table \ref{poisson_f0:tab} shows very positive results for NIRD applied to the Poisson problem with the more oscillatory right-hand side,
\begin{align*}
f_1(x,y) &= \alpha(x,y)\sin(3\sqrt{P}\pi x)\sin(3\sqrt{P}\pi y), 
\end{align*}
where $\alpha(x,y)$ yields random values in $[-1,1]$ over the domain such that the amplitudes of the sine humps for $f_1$ vary randomly while retaining the function's continuity. Note that for the smooth right-hand side, $f_0$, the same problem is solved as the number of processors, $P$, grows, but for $f_1$, the problem actually changes with $P$. Also, since $f_1$ is uniformly oscillatory and cannot be adequately resolved to produce useful adaptive mesh refinement by the small number of elements used in the preprocessing step, uniform mesh refinement is used during preprocessing for this problem. This may account for the jump in the quantity $\eta_1/\eta_0$ seen in the table. NIRD performance for the oscillatory right-hand side is even better than for the smooth right-hand side, which is notable since the problem with the more oscillatory right-hand side (and subsequently more oscillatory solution) is traditionally more difficult to solve, requiring finer meshes to properly resolve the solution. For this problem, NIRD converges on the first iteration (as shown by the small values for $K^{(1})$), and the measured value for $C_0$ is small and constant across all PoU's and processor counts. 



\subsection{Advection-diffusion}

The next test problem introduces a constant advection term, $\mathbf{b}$, and modifies the boundary conditions. A Neumann condition is used on the East side of the square domain, and Dirichlet conditions are retained on the North, South, and West sides. The advection term is grid aligned,
\begin{align*}
\mathbf{b} = \begin{bmatrix}
\pm 15\\
0 \end{bmatrix},
\end{align*}
so that the direction of advection is either straight into or away from the Dirichlet condition on the West side, depending on the sign. Note that when using the smooth right-hand side, $f_0$, the solution to the problem has a very different character depending on the sign for $\mathbf{b}$. For a negative sign, the solution has a steep boundary layer against the West side of the domain, whereas this boundary layer is not present in the case where $\mathbf{b}$ is positive. Figure \ref{adv-diff_solns:fig} shows the solutions and corresponding adaptively refined meshes (using global ACE refinement) to the problem for each sign of $\mathbf{b}$. When no boundary layer is present, NIRD converges quickly to an accurate solution, and the value of $C_0$ is small and constant, as shown in Table \ref{adv-diff_no_boundary_layer_f4:tab}. With the boundary layer, however, NIRD performance is clearly not as good, as can be seen in Figure \ref{adv-diff_f4_scaling_conv:fig}. As the number of processor's grows, more NIRD iterations are required to obtain a solution with acceptable accuracy. Notably, Table \ref{adv-diff_f4:tab} also shows significantly larger values for $C_0$ when the boundary layer is present. Again, this supports the hypothesis that Assumption \ref{C_0:ass} is a good indicator of NIRD performance and also suggests that the boundary layer, which is present in the subproblem solutions as well as the global solution, may be causing issues for NIRD by making the NIRD error less ``localized." That is, processors that are far from the West side of the domain still need to resolve the boundary layer there in their subproblem solution, and they do a somewhat poor job of this, resulting in larger values of $C_0$ and poor NIRD convergence on the first iteration. This idea can be more clearly understood by looking at a visualization of an example subproblem solution as shown in Figure \ref{adv-diff_subproblem_plots:fig}. The home domain for the subproblem shown is far from the boundary layer, but the solution retains a steep gradient along the boundary layer and the mesh is heavily refined there. Changing the PoU used by NIRD does not make a substantial difference in performance on the advection-diffusion problem with the boundary layer, as seen in Table \ref{adv-diff_f4:tab}. In each case, $C_0$ is relatively large, and multiple NIRD iterations are required in order to converge to a solution with accuracy comparable to a traditional solve.


\begin{figure}[h]
\includegraphics[width=\textwidth]{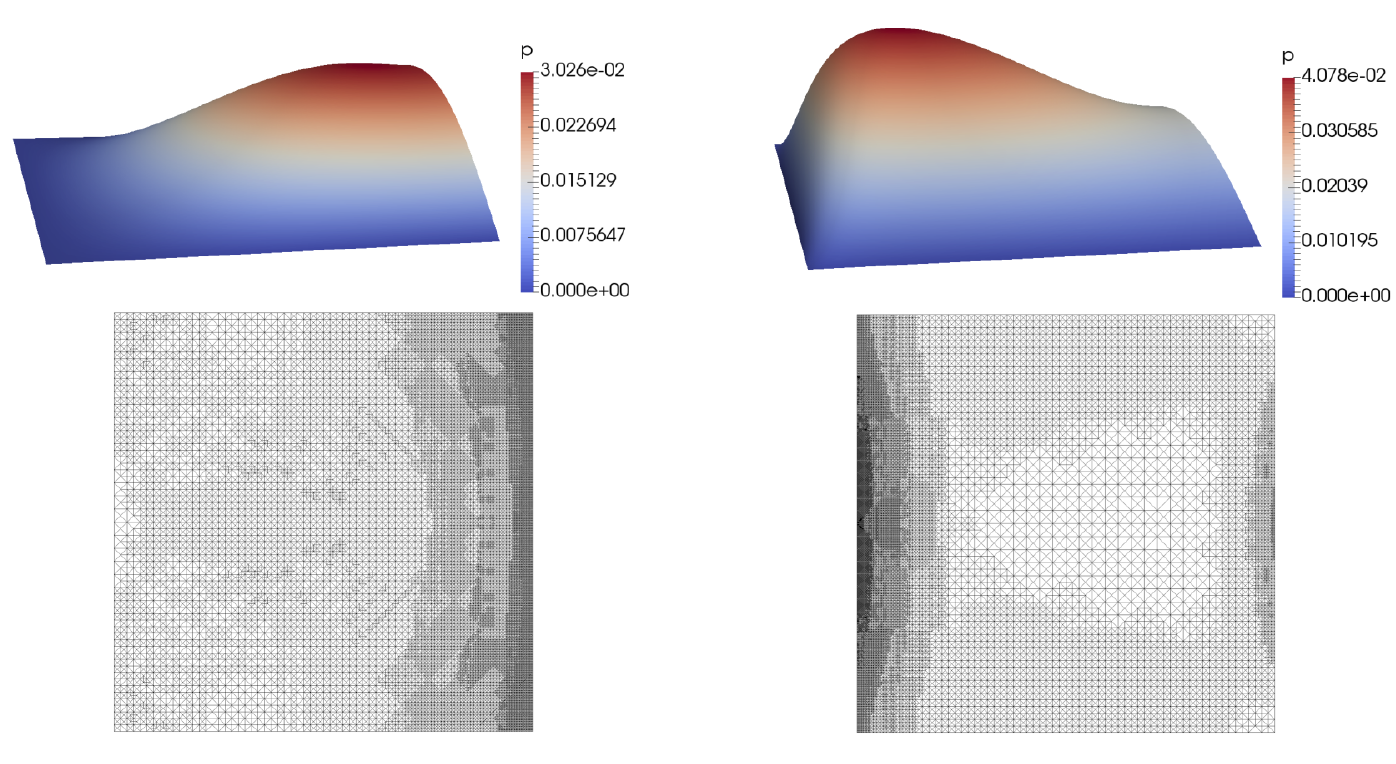}
\caption{Solutions and associated adaptive meshes for the advection-diffusion problem with positive $\mathbf{b}$ (left) and negative $\mathbf{b}$ (right).}
\label{adv-diff_solns:fig}
\end{figure}


\begin{figure}[h]
\includegraphics[width=\textwidth]{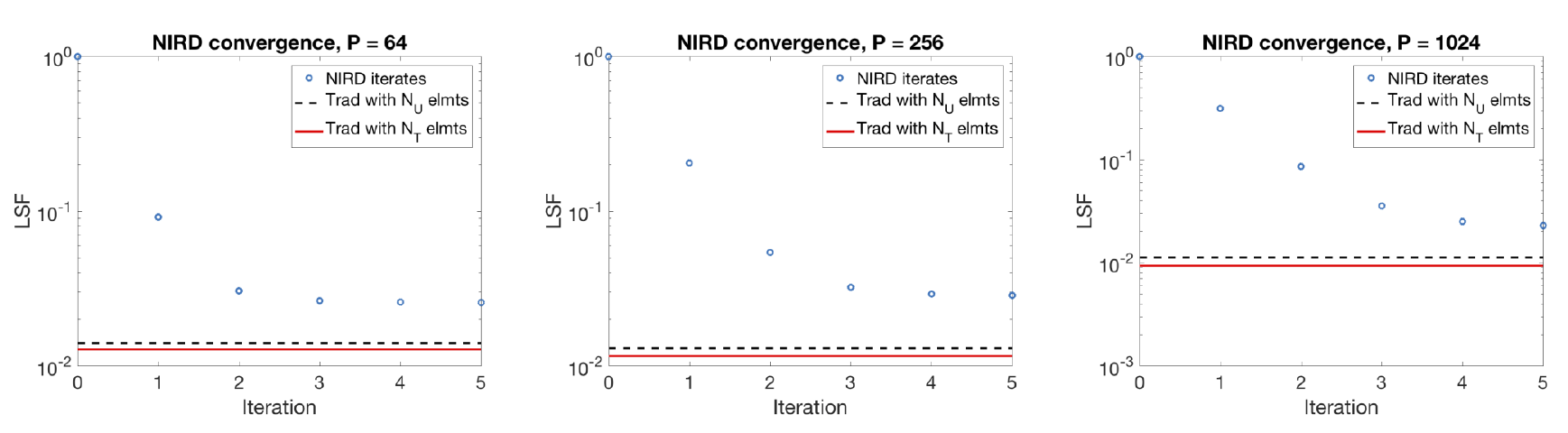}
\caption{NIRD convergence for advection-diffusion with a smooth right-hand side and a boundary layer in the solution using a discontinuous PoU.}
\label{adv-diff_f4_scaling_conv:fig}
\end{figure}


\begin{figure}[h]
\includegraphics[width=\textwidth]{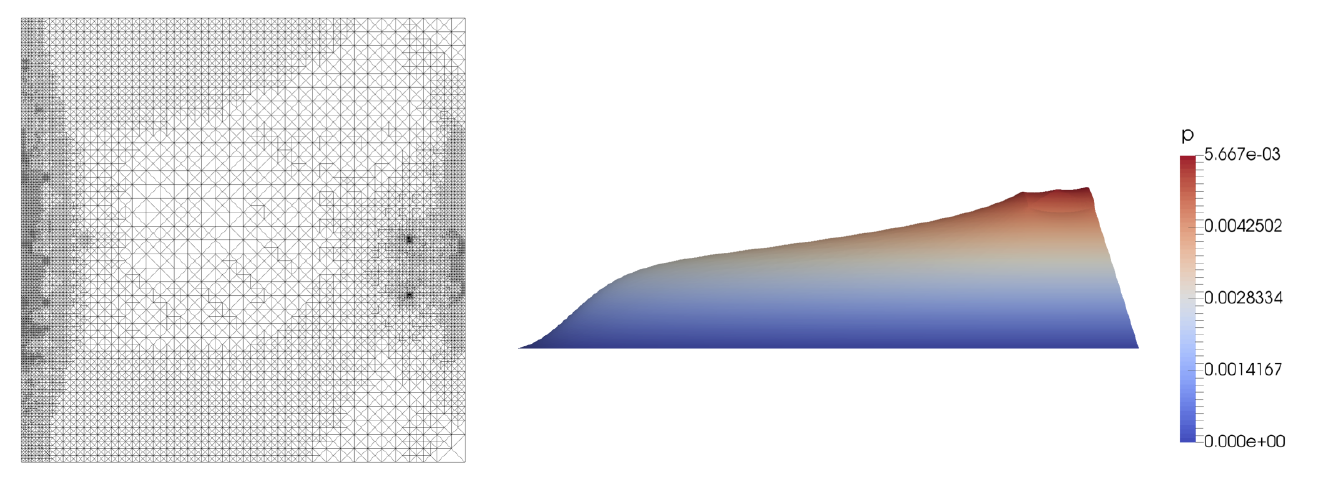}
\caption{An example subproblem mesh (left) and side-view of the solution (right) for a processor far from the boundary layer in the advection-diffusion problem.}
\label{adv-diff_subproblem_plots:fig}
\end{figure}


It is worth noting again that NIRD is actually still performing quite well on the advection-diffusion problem with the boundary layer, even though it is not achieving the same immediate convergence observed for Poisson. An accurate solution with functional error within a small factor of a traditional solution is achieved in 3 or 4 iterations for $P=$ 1,024, each of which use only $\log_2 P$ communications, making this still significantly cheaper than traditional methods.

Examining the advection-diffusion problem with negative $\mathbf{b}$ and the oscillatory right-hand side, $f_1$, yields yet more evidence that the dominance of the boundary layer feature in the solution is what is causing problems for NIRD. The solution for this problem setup still has a boundary layer on the West side of the domain, but the oscillatory right-hand side results in relatively steep changes in the solution that occur over the whole domain. The effect is that the boundary layer plays a much less significant role, and the error in the NIRD solution is once again dominated by the error contributions from local processors as shown by the small values of $C_0$ in Table \ref{adv-diff_f0:tab}. Subsequently, good NIRD convergence on the first iteration is also restored for this problem, as indicated by the small values of $K^{(1)}$ in Table \ref{adv-diff_f0:tab}.


\subsection{Poisson with anisotropy or jump coefficients}

The next set of test problems include anisotropy or jump coefficients. First, consider a jump-coefficient Poisson problem with $\alpha$ from equation (\ref{diff_coeff_and_anisotropy:eq}) set as
\begin{align*}
\alpha(x,y) = \begin{cases} 1,\,\,\,\,\,\text{if } (x,y)\in [0,0.5]\times[0,0.5]\cup[0.5,1]\times[0.5,1] \\ 100,\,\,\,\,\, \text{if }(x,y)\in (0.5,1]\times[0,0.5)\cup[0,0.5)\times(0.5,1] \end{cases},
\end{align*}
which yields a standard, $2\times 2$ checkerboard pattern of discontinuous coefficients, and set $\epsilon = 1$, $\mathbf{b} = \mathbf{0}$, with Dirichlet conditions on the entire boundary. This problem seems to cause no issues for NIRD, and excellent convergence is observed for the smooth right-hand side using the discontinuous PoU as shown in Figure \ref{jump_coeff_f4_scaling_conv:fig}. Similar to the standard Poisson case, there is no real benefit to using either the $C^0$ or $C^\infty$ PoU for this problem, since the discontinuous PoU is sufficient for good convergence, and when solving the problem with the oscillatory right-hand side, NIRD again performs very well.

\begin{figure}[h!]
\includegraphics[width=\textwidth]{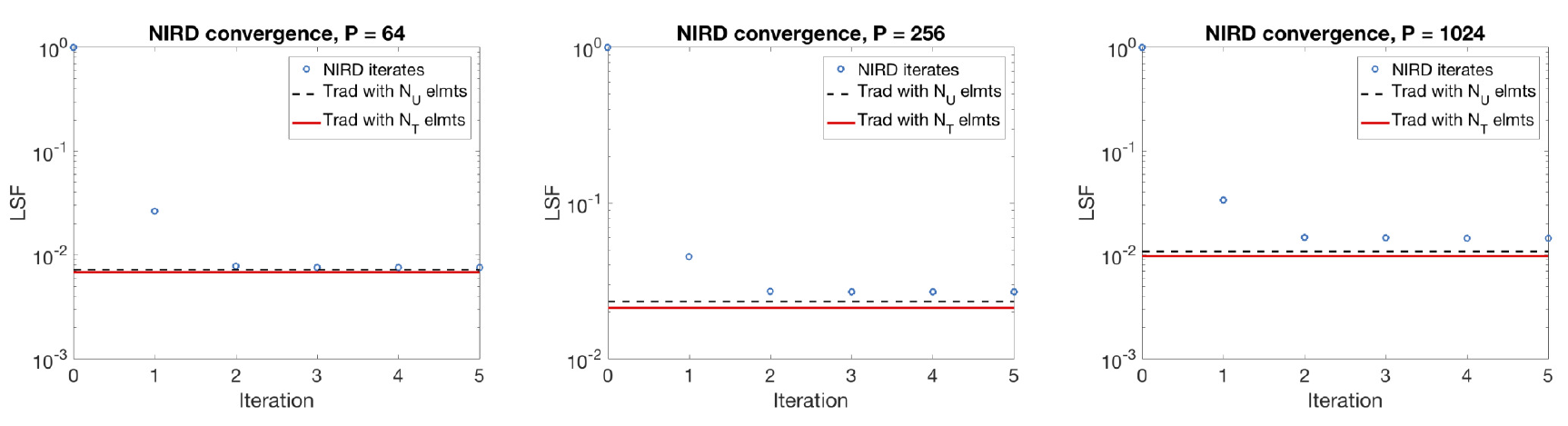}
\caption{NIRD convergence for jump-coefficient Poisson with a smooth right-hand side using a discontinuous PoU.}
\label{jump_coeff_f4_scaling_conv:fig}
\end{figure}

Next, consider the anisotropic Poisson problem where $\epsilon = 0.001$, again keeping all other parameters at their default values, $\alpha = 1$, $\mathbf{b} = \mathbf{0}$, Dirichlet conditions on the entire boundary, and again starting by studying the smooth right-hand side, $f_0$. This problem is of particular interest because it is the only problem presented here for which the smoother PoU's perform better than the discontinuous PoU. As shown in Figure \ref{grid_aligned_anisotropic_f4_pou_conv:fig}, there is a wide gap between the NIRD solution and the optimal solution using $N_U$ elements for the discontinuous PoU, indicating that NIRD is doing a poor job of distributing the elements. When using the discontinuous PoU, NIRD places too many elements near the corners and edges of home domains and, as a consequence, is not globally well refined. The $C^0$ and $C^\infty$ PoU's, however, yield much more sensible refinement patterns, with the $C^\infty$ PoU obtaining more elements in the union mesh. Thus, the $C^0$ and $C^\infty$ PoU's yield scalable NIRD performance with significantly better accuracy on the first and second iterations than the discontinuous PoU, as shown in Table \ref{grid_aligned_anisotropic_f4:tab}. It is also worth noting that the values for $C_0$ are still relatively small and constant across all PoU's, which again correlates to swift NIRD convergence in all cases. The difference in quality of solution here is almost entirely due to the differences in element distribution for the union mesh.

It is again notable that solving the same anisotropic Poisson problem with the oscillatory right-hand side, $f_1$, yields excellent NIRD convergence for all PoU's. As shown by the small values of $K^{(1)}$ in Table \ref{grid_aligned_anisotropic_f0:tab}, NIRD achieves accuracy very close to that achieved by a traditional solve within the first iteration, indicating once more that the oscillatory right-hand side yields a source of error that dominates other problem features (such as those induced by advection or anisotropy) and is also handled well by NIRD. 

\begin{figure}[t]
\includegraphics[width=\textwidth]{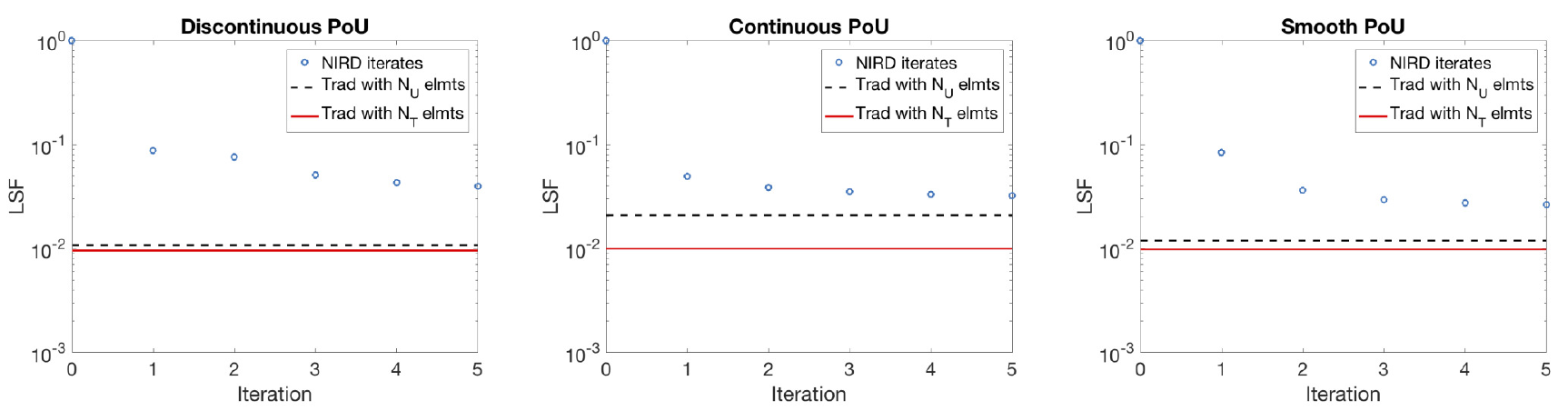}
\caption{Compare NIRD convergence using different PoU's for anisotropic Poisson with a smooth right-hand side and 1,024 processors.}
\label{grid_aligned_anisotropic_f4_pou_conv:fig}
\end{figure}

%



\subsection{Higher-order finite elements}
\label{ho_elements:subsec}

All results from the previous subsections used linear, Lagrange finite elements. This subsection examines the performance of NIRD when applied to higher-order discretizations. In order to justify the use of higher-order elements, consider a solution that has steep gradients. Specifically, this section uses a test problem inspired by Problem 2.9 (the wave front problem) from Mitchell's set of test problems in \cite{Mitchell:2013fl}:
\begin{align*}
-\Delta p &= f_2 , \, \text{ on } \Omega = [0,1]\times[0,1],\\
p &= 0 , \, \text{ on } \Gamma,\\
f_2(x,y) &= -\frac{a + a^3 (r_0^2 - r_1^2)}{r_1 (1 + a^2 (r_0 - r_1)^2)^2}  -\frac{a + a^3 (r_0^2 - r_2^2)}{r_2 (1 + a^2 (r_0 - r_2)^2)^2}, \\
r_0 &= 0.3, \\
r_1(x,y) &= \sqrt{(x-x_1)^2+(y-y_1)^2}, \\
r_2(x,y) &= \sqrt{(x-x_2)^2+(y-y_2)^2}, 
\end{align*}
where $(x_1,y_1) = (0.65,0.65)$, $(x_2,y_2) = (0.35,0.35)$, and $a = 100$. The solution, $p$, to the above problem is shown in Figure \ref{mitchell_29_soln}. This is the test problem used throughout this subsection.

\begin{figure}
\centering
\includegraphics[width=0.7\textwidth]{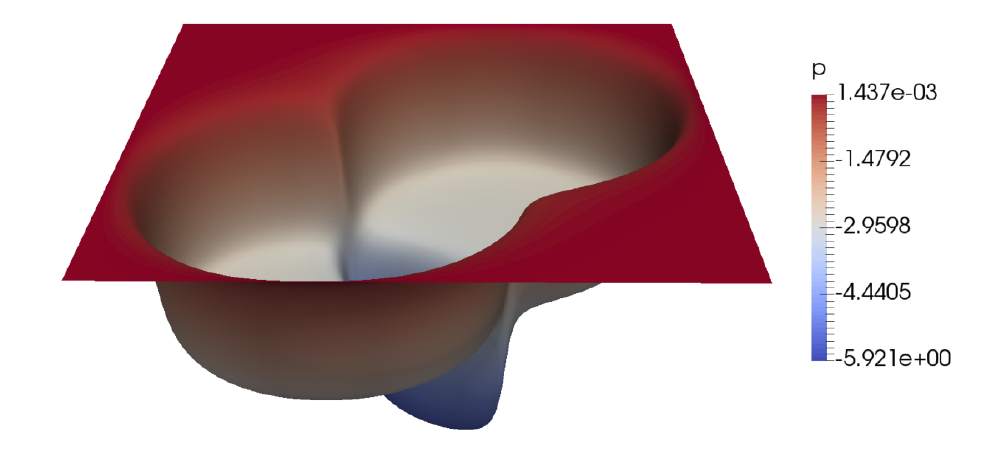}
\caption{Solution to the wave front problem used for testing higher-order elements.}
\label{mitchell_29_soln}
\end{figure}

One important practical note is that increasing element order necessitates decreasing the number of elements per processor, $E$, due to memory constraints. For the tests presented here, $E = 20,000$ is used for degree 1 elements, $E = 5,000$ for degree 2, $E = 2,000$ for degree 3, and $E = 1,000$ for degree 4. Note that this also limits the number of processors that may be used, since NIRD only works well in the regime where $E >> P$. As previously mentioned, this limitation may be overcome by applying NIRD at the level of parallelism between nodes, treating $P$ as the number of nodes, and thus, increasing the available memory and allowing for larger $E$.

Figure \ref{mitchell_degree_performance:fig} shows that NIRD with the discontinuous PoU achieves functional error within a small fraction of that achieved by a traditional solve in one iteration as polynomial order increases from one to four (note the difference in scale for the vertical axis as polynomial order increases). The gap between $N_U$ and $N_T$ does widen somewhat as the degree increases, but this is due to the fact that $E$ is decreasing relative to $P$, leaving less elements to be used for adaptive refinement on the subproblems after the preprocessing step.

\begin{figure}[t]
\centering
\includegraphics[width=0.9\textwidth]{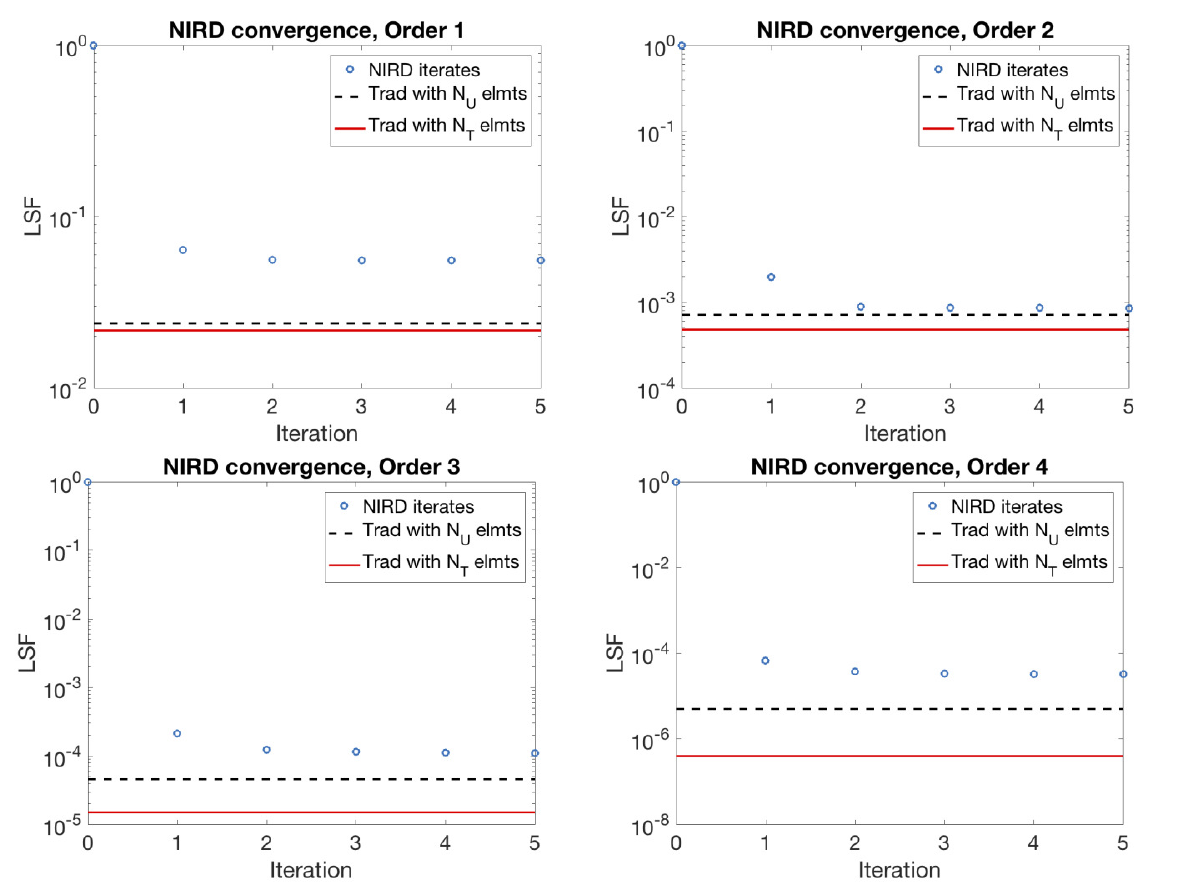}
\caption{NIRD convergence using a discontinuous PoU, 256 processors, and varying the polynomial order of the finite element discretization for the wave front problem (note the difference in scale for the vertical axis as polynomial order increases).}
\label{mitchell_degree_performance:fig}
\end{figure}

When comparing different PoU's, NIRD performance is quite similar across all polynomial orders. The excellent performance achieved by the discontinuous PoU shown in Figure \ref{mitchell_degree_performance:fig} indicates that smoother PoU's are not necessary as polynomial degree of the finite elements increases. This is likely due to the fact that, although the discontinuous PoU clearly reduces the smoothness of the right-hand side, it introduces discontinuities only along element boundaries. For Lagrange elements, the finite element approximation is only $C^0$ at these boundaries anyway for any polynomial order on the interior of the element. Coupling these facts with adaptive mesh refinement yields no degradation in the finite element convergence for the subproblems, even when using the discontinuous PoU. As shown in Figure \ref{mitchell_subproblem_conv:fig}, the LSF for each processor's subproblem (using the discontinuous PoU) converges at a rate of $O(h^q)$, where $q$ is the polynomial degree of the finite elements (here $h$ is defined as $N^{-1/d}$). Note that this also supports the the use of Assumption \ref{interp:ass} in the convergence theory presented in Section \ref{NIRD_conv:sec}.

\begin{figure}[t]
\centering
\includegraphics[width=0.9\textwidth]{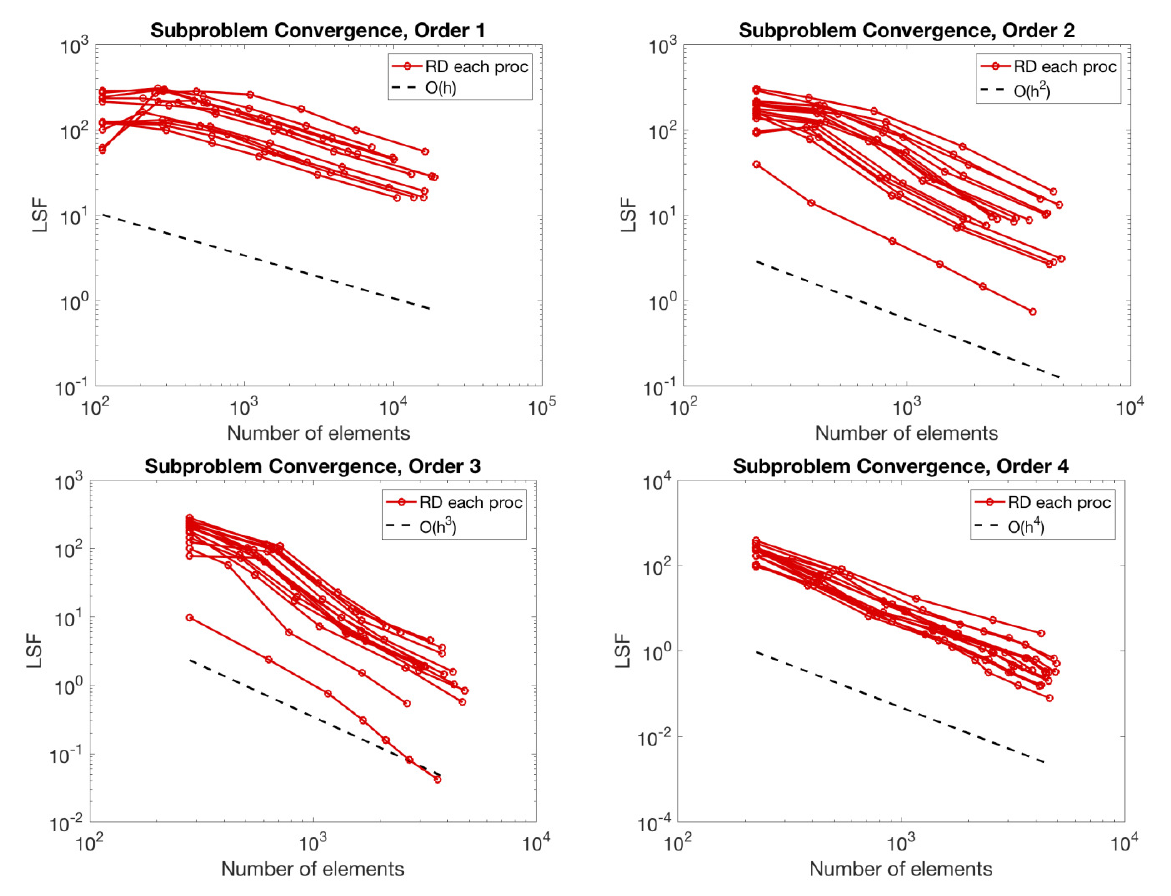}
\caption{NIRD subproblem convergence using a discontinuous PoU, 16 processors, and varying the polynomial order of the finite element discretization for the wave front problem (note the difference in scale for the vertical axis as polynomial order increases).}
\label{mitchell_subproblem_conv:fig}
\end{figure}

\subsection{Discussion}

Throughout studying the above test problems, a few broad observations can be made. First, note that for many of the test problems above, the preprocessing step achieves a fairly small ratio of $\eta_1/\eta_0$ and the NIRD solution is able to recover accuracy fairly close to what traditional ACE refinement using $N_U$ elements achieves. This is especially notable in the cases of the advection-diffusion problem with the boundary layer and the wave front problem, both of which have highly localized features demanding adaptive refinement. This indicates that the adaptive preprocessing strategy developed in Section \ref{NIRD_preprocessing:sec} is performing well and doing a good job of equidistributing error among processors, subsequently allowing NIRD to achieve a union mesh with nearly optimal element distribution.

Second, some general statements comparing the different PoU's tested here may be made. In general, the discontinuous PoU, though the simplest of the PoU's, was sufficient to obtain good NIRD performance on many of the test problems. In fact, even for higher-order finite elements, the NIRD subproblems achieve the proper order of finite element convergence due to the fact that the discontinuities in the right-hand sides are only along element boundaries. With that said, there are problems such as the anisotropic Poisson example presented here for which the $C^0$ and $C^\infty$ PoU's perform better due to the discontinuous PoU's suboptimal element distribution. In general, the discontinuous PoU, which has the smallest support for the characteristic functions, achieves the highest number of elements in the union mesh but has the least optimal element distribution, whereas the $C^0$ PoU, which has the largest support for the characteristic functions, obtains union meshes that are closest to optimal in their element distribution but have significantly fewer elements, and the $C^\infty$ PoU is a middle ground between the other two.

The final important observation is that the numerical results here both validate the line of proof described in Section \ref{NIRD_conv:sec} and contribute additional understanding and intuition about what causes NIRD to work well. The assumption that $C_0$ is small and constant with respect to $P$ is the crux of the updated convergence proof presented in this paper. A strong correlation is observed throughout the numerical results between small values of $C_0$ and good NIRD convergence on the first iteration: for example, $C_0$ is, at most, about 2 or 3 for Poisson, where NIRD converges well on the first iteration, and $C_0$ is as large as 50 for the advection-diffusion problem with a boundary layer, where NIRD converges poorly on the first iteration. This suggests that the line of proof in Section \ref{NIRD_conv:sec} is valid in practice and can be predictive of NIRD performance. This is a step in the right direction for the convergence theory, but a better understanding of how to bound $C_0$ using a priori knowledge of the PDE is still needed. 

A further interesting note is that NIRD performs very well (and $C_0$ is small) across all the test problems when an oscillatory right-hand side is used. The intuitive explanation here is that the use of an oscillatory right-hand side generates NIRD subproblems in which the dominant error is focused inside the support of the characteristic function, where the solution is more difficult to resolve given a more oscillatory right-hand side. This dominance of error in a compactly supported region is one of the guiding principles that enables NIRD to work in the first place, but starting with a highly oscillatory right-hand side seems to magnify this dominance, resulting in exceptional NIRD performance.

\section{Conclusions}

This paper has studied Nested Iteration with Range Decomposition (NIRD), a low-communication solver for elliptic PDE's. The work presented here demonstrates that NIRD is capable of achieving excellent convergence even when faced with a variety of difficulties in the elliptic problem or higher-order finite elements. In fact, the performance model developed here shows that NIRD provides even larger benefits over traditional methods for some more difficult problems. The enhancements and additions to the implementation of NIRD presented in this paper, namely, an adaptive preprocessing step, $C^0$ and $C^\infty$ partitions of unity, and more accurate a posteriori error estimation on the subproblems, further improve NIRD performance on more difficult problems, making the method overall more robust and viable in practice. In addition, the use of numerical results to study assumptions and support the arguments of the convergence theory for NIRD provides valuable insight into the behavior of the method and lays the groundwork for future research. In particular, the results and theory presented here suggest that NIRD performs best when the localized right-hand side is responsible for a dominant amount of the error in the NIRD subproblems as opposed to other global features that may be far from a given processor's home domain. Future research will continue to address this idea by testing NIRD on other problems that may present difficulties through the presence of global features such as reentrant corners.

NIRD is part of an emerging trend towards designing novel algorithms that seek to limit communication rather than optimizing or modifying existing solver techniques. This paper demonstrates NIRD's great promise as a low-communication solver that delivers superior accuracy for the number of communications performed when compared with traditional approaches. NIRD also has some significant advantages over other low-communication methods such as the parallel adaptive mesh algorithm of Bank and Holst \cite{Bank:1999uq, Bank:2006hn} or Mitchell's full domain partition multigrid method \cite{Mitchell:2004hz, Mitchell:2016vg, Mitchell:1998kr}, and thus represents a promising approach in novel, low-communication solvers for elliptic PDE's. Such algorithms are sure to rise in prominence as modern computer architectures continue to grow and the dominant cost continues to shift further away from computation towards communication, especially latency cost. While modification and optimization of existing algorithms can achieve some substantial savings in communication cost, the only way to drastically reduce the number of communications and even change the scaling of communication cost is to develop novel algorithms with this as their premise.

Application areas with a need for solving elliptic PDE's on the largest of modern machines should be implementing and using low-communication algorithms. The NIRD algorithm has now been shown to work well on a wide set of test problems and is also fairly straightforward to implement in the framework of an existing finite element package. Introducing the NIRD algorithm into a larger community will not only provide many users with a much faster method for their elliptic PDE solves but also provide the next testbed for the algorithm, hopefully yielding additional insight into which problems and applications the method is best suited for. 

A major open question of this research is whether algorithms like NIRD may be applied to non-elliptic PDE problems. The central idea of NIRD is to decompose the PDE problem rather than the computational domain. There may be other decompositions of the PDE (besides decomposing the right-hand side) that would work well for non-elliptic PDE's, and exploration of this topic will be a focus of future research.


\bibliographystyle{unsrt}
\bibliography{bibfile}


\begin{table}[p!]
\centering
\begin{tabular}{c | c | c | c | c | c | c}
$P$ & $C_s$ & $\tilde{C}_s$ & $C_\rho$ & $\hat{Q}$ & $C_b$ & $\tilde{C}_b$ \\
\hline
64   & 6.89   & 1.41 & 29.90 & 1.83E-04 & 2.78E+03 & 1.82 \\
256  & 49.01  & 1.93 & 64.75 & 1.39E-06 & 1.18E+05 & 1.52 \\
1024 & 258.10 & 2.05 & 73.41 & 2.87E-07 & 2.25E+06 & 2.09
\end{tabular}
\caption{Measured constants for Poisson with a smooth RHS.}
\label{david_constants:tab}
\end{table}

\begin{table}[h!]
\centering
\begin{tabular}{ c | c || c | c | c | c | c | c | c }
PoU & $P$ & $\eta_1/\eta_0$ & $N_c$ & $C_0$ & $Q^{(1)}$ & $K^{(1)}$ & $Q^{(2)}$ & $K^{(2)}$ \\
\hline
& 64      & 2.68 & 64   & 1.82 & 0.34 & 3.35 & 0.92 & 1.16      \\
Discts. & 256  & 2.81 & 952  & 1.71 & 0.55 & 2.37 & 0.89 & 1.19 \\
        & 1024 & 6.17 & 2524 & 2.29 & 0.42 & 3.21 & 0.83 & 1.56 \\
\hline
        & 64   & 2.68 & 64   & 0.82 & 0.15 & 3.62 & 0.19 & 2.75 \\
$C^0$    & 256  & 2.81 & 952  & 0.97 & 0.35 & 2.49 & 0.41 & 2.00 \\
        & 1024 & 6.17 & 2524 & 0.89 & 0.18 & 3.44 & 0.24 & 2.80 \\
\hline
& 64     & 2.68 & 64   & 0.52 & 0.28 & 3.01 & 0.73 & 1.52       \\
$C^\infty$ & 256  & 2.81 & 952  & 0.85 & 0.51 & 2.27 & 0.80 & 1.26 \\
       & 1024 & 6.17 & 2524 & 2.71 & 0.38 & 4.82 & 0.69 & 1.81
\end{tabular}
\caption{Table of relevant values for NIRD using different PoU's for Poisson with a smooth RHS.}
\label{poisson_f4:tab}
\end{table}

\begin{table}[h!]
\centering
\begin{tabular}{ c | c || c | c | c | c | c | c | c }
PoU & $P$ & $\eta_1/\eta_0$ & $N_c$ & $C_0$ & $Q^{(1)}$ & $K^{(1)}$ & $Q^{(2)}$ & $K^{(2)}$ \\
\hline
&  64      & 6.23 & 256   & 1.74 & 0.69 & 1.34 & 0.93 & 1.03   \\
Discts. & 256  & 3.01  & 4096 & 3.44 & 0.56 & 1.50 & 0.74 & 1.14 \\
        & 1024 & 31.36 & 4096 & 2.38 & 0.56 & 1.62 & 0.72 & 1.17 \\
\hline
        & 64   & 6.23  & 256  & 0.91 & 0.48 & 1.79 & 0.51 & 1.49 \\
$C^0$    & 256  & 3.01  & 4096 & 1.56 & 0.46 & 1.69 & 0.51 & 1.43 \\
        & 1024 & 31.36 & 4096 & 1.03 & 0.35 & 2.23 & 0.37 & 1.85 \\
\hline
& 64     & 6.23 & 256   & 0.92 & 0.69 & 1.34 & 0.88 & 1.06       \\
$C^\infty$ & 256  & 3.01  & 4096 & 1.33 & 0.55 & 1.52 & 0.70 & 1.17 \\
       & 1024 & 31.36 & 4096 & 0.93 & 0.55 & 1.63 & 0.67 & 1.23
\end{tabular}
\caption{Table of relevant values for NIRD using different PoU's for Poisson with an oscillatory RHS.}
\label{poisson_f0:tab}
\end{table}

\begin{table}[h]
\centering
\begin{tabular}{ c | c || c | c | c | c | c | c | c }
PoU & $P$ & $\eta_1/\eta_0$ & $N_c$ & $C_0$ & $Q^{(1)}$ & $K^{(1)}$ & $Q^{(2)}$ & $K^{(2)}$ \\
\hline
& 64      & 5.43 & 64   & 2.09 & 0.33 & 2.92 & 0.92 & 1.06       \\
Discts. & 256  & 9.98 & 436  & 2.17 & 0.39 & 2.53 & 0.91 & 1.13 \\
        & 1024 & 9.71 & 1600 & 2.73 & 0.35 & 2.57 & 0.85 & 1.17 \\
\hline
        & 64   & 5.43 & 64   & 1.11 & 0.13 & 3.06 & 0.21 & 2.16 \\
$C^0$    & 256  & 9.98 & 436  & 1.13 & 0.15 & 2.75 & 0.26 & 1.98 \\
        & 1024 & 9.71 & 1600 & 0.98 & 0.12 & 2.94 & 0.22 & 2.23 \\
\hline
& 64     & 5.43 & 64   & 0.44 & 0.20 & 2.98 & 0.73 & 1.26     \\
$C^\infty$ & 256  & 9.98 & 436  & 1.90 & 0.32 & 3.99 & 0.81 & 1.42 \\
       & 1024 & 9.71 & 1600 & 1.87 & 0.31 & 4.23 & 0.75 & 1.48
\end{tabular}
\caption{Table of relevant values for NIRD using different PoU's for advection-diffusion with a smooth RHS and no boundary layer in the solution.}
\label{adv-diff_no_boundary_layer_f4:tab}
\end{table}

\begin{table}[h]
\centering
\begin{tabular}{ c | c || c | c | c | c | c | c | c }
PoU & $P$ & $\eta_1/\eta_0$ & $N_c$ & $C_0$ & $Q^{(1)}$ & $K^{(1)}$ & $Q^{(2)}$ & $K^{(2)}$ \\
\hline
& 64      & 4.95 & 226  & 45.65 & 0.15  & 7.00 & 0.84  & 2.39    \\
Discts. & 256  & 4.70 & 758   & 51.53 & 0.15 & 17.14 & 0.79 & 4.58  \\
        & 1024 & 8.77 & 2062  & 48.57 & 0.15 & 31.96 & 0.69 & 8.76  \\
\hline
        & 64   & 4.95 & 226   & 13.11 & 0.08 & 6.31  & 0.37 & 2.84  \\
$C^0$    & 256  & 4.70 & 758   & 21.43 & 0.05 & 13.78 & 0.29 & 4.24  \\
        & 1024 & 8.77 & 2062  & 22.33 & 0.06 & 28.76 & 0.22 & 7.07  \\
\hline
& 64     & 4.95 & 226  & 11.87 & 0.17  & 7.11 & 0.78  & 2.43        \\
$C^\infty$ & 256  & 4.70 & 758   & 16.39 & 0.18 & 16.63 & 0.71 & 4.28  \\
       & 1024 & 8.77 & 2062  & 19.84 & 0.20 & 34.50 & 0.61 & 10.08
\end{tabular}
\caption{Table of relevant values for NIRD using different PoU's for advection-diffusion with a smooth RHS and a boundary layer in the solution.}
\label{adv-diff_f4:tab}
\end{table}

\begin{table}[h!]
\centering
\begin{tabular}{ c | c || c | c | c | c | c | c | c }
PoU & $P$ & $\eta_1/\eta_0$ & $N_c$ & $C_0$ & $Q^{(1)}$ & $K^{(1)}$ & $Q^{(2)}$ & $K^{(2)}$ \\
\hline
& 64      & 7.33 & 256   & 2.05 & 0.65 & 1.41 & 0.93 & 1.04      \\
Discts. & 256  & 7.04  & 1024 & 2.21 & 0.63 & 1.47 & 0.88 & 1.08 \\
        & 1024 & 29.76 & 4096 & 2.49 & 0.56 & 1.63 & 0.72 & 1.18 \\
\hline
        & 64   & 7.33  & 256  & 1.20 & 0.46 & 1.87 & 0.51 & 1.51 \\
$C^0$    & 256  & 7.04  & 1024 & 1.29 & 0.42 & 1.96 & 0.44 & 1.60 \\
        & 1024 & 29.76 & 4096 & 1.09 & 0.35 & 2.25 & 0.37 & 1.86 \\
\hline
& 64     & 7.33 & 256   & 1.13 & 0.65 & 1.42 & 0.88 & 1.09      \\
$C^\infty$ & 256  & 7.04  & 1024 & 1.12 & 0.61 & 1.46 & 0.83 & 1.11 \\
       & 1024 & 29.76 & 4096 & 1.04 & 0.54 & 1.64 & 0.67 & 1.24
\end{tabular}
\caption{Table of relevant values for NIRD using different PoU's for advection-diffusion with an oscillatory RHS and a boundary layer in the solution.}
\label{adv-diff_f0:tab}
\end{table}

\begin{table}[h]
\centering
\begin{tabular}{ c | c || c | c | c | c | c | c | c }
PoU & $P$ & $\eta_1/\eta_0$ & $N_c$ & $C_0$ & $Q^{(1)}$ & $K^{(1)}$ & $Q^{(2)}$ & $K^{(2)}$ \\
\hline
& 64      & 2.04 & 64   & 1.35 & 0.35 & 14.68 & 0.81  & 10.94       \\
Discts. & 256  & 2.59 & 256  & 1.75 & 0.34  & 14.30 & 0.80  & 11.90 \\
        & 1024 & 7.02 & 2196 & 1.44 & 0.33  & 8.61  & 0.79  & 7.84  \\
\hline
        & 64   & 2.04 & 64   & 0.35 & 0.11  & 4.65  & 0.19  & 3.28  \\
$C^0$    & 256  & 2.59 & 256  & 0.72 & 0.08  & 5.44  & 0.18  & 3.68  \\
        & 1024 & 7.02 & 2196 & 1.05 & 0.14  & 4.97  & 0.24  & 3.80  \\
\hline
        & 64   & 2.04 & 64   & 0.73 & 0.17  & 7.31  & 0.49  & 3.61  \\
$C^\infty$  & 256  & 2.59 & 256  & 0.78 & 0.18  & 7.74  & 0.50  & 3.22  \\
        & 1024 & 7.02 & 2196 & 1.28 & 0.25  & 6.58  & 0.56  & 3.40 
\end{tabular}
\caption{Table of relevant values for NIRD using different PoU's for anisotropic Poisson with a smooth RHS.}
\label{grid_aligned_anisotropic_f4:tab}
\end{table}

\begin{table}[h]
\centering
\begin{tabular}{ c | c || c | c | c | c | c | c | c }
PoU & $P$ & $\eta_1/\eta_0$ & $N_c$ & $C_0$ & $Q^{(1)}$ & $K^{(1)}$ & $Q^{(2)}$ & $K^{(2)}$ \\
\hline
& 64      & 8.69 & 256   & 4.24 & 0.54 & 1.81 & 0.85 & 1.41       \\
Discts. & 256  & 3.33  & 4096 & 6.64 & 0.46 & 1.99 & 0.61 & 1.65 \\
        & 1024 & 14.16 & 4096 & 4.12 & 0.37 & 2.00 & 0.61 & 1.80 \\
\hline
        & 64   & 8.69  & 256  & 1.61 & 0.44 & 1.90 & 0.51 & 2.02 \\
$C^0$    & 256  & 3.33  & 4096 & 3.36 & 0.40 & 1.97 & 0.46 & 1.78 \\
        & 1024 & 14.16 & 4096 & 1.87 & 0.28 & 2.09 & 0.35 & 2.51 \\
\hline
& 64     & 8.69 & 256   & 1.86 & 0.55 & 1.80 & 0.80 & 1.44       \\
$C^\infty$ & 256  & 3.33  & 4096 & 3.31 & 0.45 & 1.95 & 0.58 & 1.59 \\
       & 1024 & 14.16 & 4096 & 1.61 & 0.36 & 1.97 & 0.59 & 1.80
\end{tabular}
\caption{Table of relevant values for NIRD using different PoU's for anisotropic Poisson with an oscillatory RHS.}
\label{grid_aligned_anisotropic_f0:tab}
\end{table}


\end{document}